\newcommand{\eps}{\varepsilon}
\newcommand{\s}{\qquad}
\newcommand{\be}{\begin{equation}}
\newcommand{\ba}{\begin{array}}
\newcommand{\ee}{\end{equation}}
\newcommand{\ea}{\end{array}}
\newcommand{\oln}{\overline}
\newcommand{\tx}{\text}
\newcommand{\ds}{\displaystyle}
\def\eop{{\ \vrule height 3pt width 3pt depth 0pt}}
\begin{document}\author{
{\sc M. Paramasivam}$^1$\;\;\;\;{\;\;\sc S.~Valarmathi}$^2$\;\;\;\;{and}{\;\;\;\;\sc J.J.H.~Miller}$^3$}
\title*{{\bf A parameter--uniform finite difference method for a singularly
perturbed linear system of second order ordinary differential equations of\\ reaction-diffusion type}}
\institute{$^1${Department of Mathematics, Bishop Heber College, Tiruchirappalli-620 017, Tamil Nadu, India. sivambhcedu@gmail.com.}\\
$^2${Department of Mathematics, Bishop Heber College, Tiruchirappalli-620 017, Tamil Nadu, India. valarmathi07@gmail.com.}\\
$^3${Institute for Numerical Computation and Analysis, Dublin, Ireland. jm@incaireland.org.}}
\titlerunning{Numerical solution of a reaction-difussion system}
\maketitle
\begin{center}
\it{dedicated to G. I. Shishkin on his 70th birthday}
\end{center}
\begin{abstract}
A singularly perturbed linear system of second order ordinary differential equations of reaction-diffusion type with given boundary conditions is considered. The
leading term of each equation is multiplied by a small positive parameter. These parameters are assumed to be distinct. The components of the solution exhibit
overlapping layers. Shishkin piecewise-uniform meshes are introduced, which are used in conjunction with a classical finite difference discretisation, to construct
two numerical methods for solving this problem. It is proved that the numerical approximations obtained with these methods are essentially first, respectively
second, order convergent uniformly with respect to all of the parameters.
\end{abstract}

\section{\large\bf Introduction}

The following two-point boundary value problem is considered for the
singularly perturbed linear system of second order differential
equations
\begin{equation}\label{BVP}
-E\vec{u}''(x)+A(x)\vec{u}(x)=\vec{f}(x),\;\;\; x\in(0,1),\;\;\;\vec{u}(0)\;\tx{and}\;\vec{u}(1)\;\tx{given.}
\end{equation}
Here $\;\vec{u}\;$ is a column $\;n-\tx{vector},\;E\;$ and
$\;A(x)\;$ are $\;n\times n\;$ matrices, $\;E =
\tx{diag}(\vec{\eps}),\;\vec\eps = (\eps_1,\;\cdots,\;\eps_n)\;$
with $\;0\;<\;\eps_i\;\le\;1\;$ for all $\;i=1,\ldots,n$. The
$\eps_i$ are assumed to be distinct and,
for convenience, to have the ordering \[\eps_1\;<\;\cdots\;<\;\eps_n.\] Cases with some of the parameters coincident are not considered here.\\
\noindent The problem can also be written in the operator form
\[\vec L\vec u\; = \;\vec{f},\;\;\;\vec{u}(0)\;\tx{and}\;\vec{u}(1)\;\tx{given}\]
where the operator $\;\vec{L}\;$ is defined by
\[\vec{L}\;=\;-E D^2 + A(x)\;\;\;\tx{and}\;\;\;D^2 = \dfrac{d^2}{dx^2}.\]
For all $x \in [0,1]$ it is assumed that the components $a_{ij}(x)$
of $A(x)$
satisfy the inequalities \\
\begin{eqnarray}\label{a1} a_{ii}(x) > \displaystyle{\sum_{^{j\neq
i}_{j=1}}^{n}}|a_{ij}(x)| \; \; \rm{for}\;\; 1 \le i \le n, \;\;
\rm{and} \;\;a_{ij}(x) \le 0 \;\; \rm{for} \; \; i \neq
j\end{eqnarray} and, for some $\alpha$,
\begin{eqnarray}\label{a2} 0 <\alpha <
\displaystyle{\min_{^{x \in [0,1]}_{1 \leq i \leq n}}}(\sum_{j=1}^n
a_{ij}(x)).
\end{eqnarray} Wherever necessary the required smoothness of the problem data is assumed.
It is also assumed, without loss of generality, that
\begin{eqnarray}\label{a3}
\max_{1 \leq i \leq n} \sqrt{\eps_i} \leq \frac{\sqrt{\alpha}}{4}.
\end{eqnarray}
The norms $\parallel \vec{V} \parallel =\max_{1 \leq k \leq n}|V_k|$
for any n-vector $\vec{V}$, $\parallel y
\parallel =\sup_{0\leq x\leq 1}|y(x)|$ for any
scalar-valued function $y$ and $\parallel \vec{y}
\parallel=\max_{1 \leq k \leq n}\parallel y_{k}
\parallel$ for any vector-valued function $\vec{y}$ are introduced.
Throughout the paper $C$ denotes a generic positive constant, which
is independent of $x$ and of all singular perturbation and
discretization parameters. Furthermore, inequalities between vectors
are understood in the componentwise sense.\\

\noindent For a general introduction to parameter-uniform numerical methods for singular perturbation problems, see \cite{MORS},  \cite{RST} and  \cite{FHMORS}.
Parameter-uniform numerical methods for various special cases of (\ref{BVP}) are examined in, for example, \cite{MMORS}, \cite{MS} and \cite{HV}. For (\ref{BVP})
itself parameter-uniform numerical methods of first and second order are considered in \cite{LM}. However, the present paper differs from \cite{LM} in two important
ways. First of all, the meshes, and hence the numerical methods, used are different from those in \cite{LM}; the transition points between meshes of differing
resolution are defined in a similar but different manner. The piecewise-uniform Shishkin meshes $M_{\vec{b}}$ in the present paper have the elegant property that
they reduce to uniform meshes whenever $\vec{b}=\vec{0}$. Secondly, the proofs of essentially first and second order parameter-uniform convergence do not require
the use of Green's function techniques, as is the case in \cite{LM}. The significance of this is that it is more likely that such techniques can be extended in
future to problems in higher dimensions and to nonlinear problems, than is the case for proofs depending on Green's functions. It is also satisfying, and
appropriate in this special issue, to be able to demonstrate that the methods of proof pioneered by G. I. Shishkin can be extended successfully to problems of this
kind.

The plan of the paper is as follows. In the next section both standard and novel bounds on the smooth and singular components of the exact solution are obtained.
The sharp estimates for the singular component in Lemma \ref{singular} are proved by mathematical induction, while an interesting ordering of the points $x_{i,j}$
is established in Lemma \ref{xs}. In Section 3 appropriate piecewise-uniform Shishkin meshes for essentially first order numerical methods are introduced, the
discrete problem is defined and the discrete maximum principle and discrete stability properties are established. In Section 4 an expression for the local
truncation error is found and two distinct standard estimates are stated. In Section 5 parameter-uniform estimates for the local truncation error of the smooth and
singular components are obtained in a sequence of lemmas. The section culminates with the statement and proof of the essentially first order parameter-uniform error
estimate. In the final section an outline of the construction and error
estimation of an essentially second order parameter-uniform numerical method is presented. \\

\section{\large\bf Analytical results}
The operator $\vec L$ satisfies the following maximum principle
\begin{lemma}\label{max} Let $A(x)$ satisfy (\ref{a1}) and (\ref{a2}). Let
$\;\vec{\psi}\;$ be
any function in the domain of $\;\vec L\;$ such that
$\;\vec{\psi}(0)\ge\vec{0}\;$ and $\;\vec{\psi}(1)\ge\vec{0}.\;$
Then $\;\vec L\vec{\psi}(x)\ge\vec{0}\;$ for
all $\;x\;\in\;(0,1)\;$ implies that $\;\vec{\psi}(x)\ge\vec{0}\;$ for all $\;x\;\in\;[0,1]$.
\end{lemma}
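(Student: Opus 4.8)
The plan is to use a standard contradiction argument based on choosing the component and point where the most negative value of $\vec\psi$ is attained. Suppose the conclusion fails, so that $\min_{1\le k\le n}\min_{x\in[0,1]}\psi_k(x)<0$. Choose an index $i$ and a point $x^*\in[0,1]$ at which this minimum is attained, i.e. $\psi_i(x^*)=\min_{1\le k\le n}\min_{x\in[0,1]}\psi_k(x)<0$. By the boundary hypotheses $\vec\psi(0)\ge\vec 0$ and $\vec\psi(1)\ge\vec 0$, the point $x^*$ cannot be an endpoint, so $x^*\in(0,1)$ and it is an interior minimum of the scalar function $\psi_i$. Consequently $\psi_i'(x^*)=0$ and $\psi_i''(x^*)\ge 0$.

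Now I would evaluate the $i$-th component of $\vec L\vec\psi$ at $x^*$. By definition,
\[
(\vec L\vec\psi)_i(x^*)=-\eps_i\,\psi_i''(x^*)+\sum_{j=1}^n a_{ij}(x^*)\,\psi_j(x^*).
\]
The first term is $\le 0$ because $\eps_i>0$ and $\psi_i''(x^*)\ge 0$. For the sum, I split off the diagonal term: using $a_{ij}(x^*)\le 0$ for $j\ne i$ from (\ref{a1}) and $\psi_j(x^*)\ge\psi_i(x^*)$ (since $\psi_i(x^*)$ is the global minimum over all components and all points), we get $a_{ij}(x^*)\psi_j(x^*)\le a_{ij}(x^*)\psi_i(x^*)$ for each $j\ne i$. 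Hence
\[
\sum_{j=1}^n a_{ij}(x^*)\psi_j(x^*)\le \Big(a_{ii}(x^*)+\sum_{j\ne i}a_{ij}(x^*)\Big)\psi_i(x^*)=\Big(\sum_{j=1}^n a_{ij}(x^*)\Big)\psi_i(x^*).
\]
By (\ref{a2}) the row sum $\sum_{j=1}^n a_{ij}(x^*)$ is positive, and $\psi_i(x^*)<0$, so this quantity is strictly negative. Combining, $(\vec L\vec\psi)_i(x^*)<0$, which contradicts the hypothesis $\vec L\vec\psi(x)\ge\vec 0$ on $(0,1)$. Therefore no such negative minimum exists and $\vec\psi(x)\ge\vec 0$ on $[0,1]$.

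The only mildly delicate point — and the one I would state carefully rather than the ``main obstacle'' — is the justification that at an interior global minimum of $\psi_i$ one has $\psi_i''(x^*)\ge 0$; this requires $\psi_i\in C^2$, which is part of $\vec\psi$ being in the domain of $\vec L$. One should also note that $x^*$ may fail to be unique, but any choice works, and the strict inequality in (\ref{a1})--(\ref{a2}) is exactly what forces the strict sign needed to close the contradiction. No sign condition beyond (\ref{a1}) and (\ref{a2}) is used, and the argument is componentwise, so it handles the coupled system without difficulty.
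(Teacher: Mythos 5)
Your proof is correct and follows essentially the same argument as the paper: choose the component and interior point where the global minimum of $\vec\psi$ is attained, use $\psi_i''(x^*)\ge 0$, and derive $(\vec L\vec\psi)_i(x^*)<0$ from the sign conditions (\ref{a1})--(\ref{a2}), contradicting the hypothesis. You merely spell out the off-diagonal/row-sum estimate that the paper leaves implicit, which is a welcome clarification but not a different method.
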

\begin{proof}Let $i^*, x^*$ be such that $\psi_{i^*}(x^{*})=\min_{i,x}\psi_i(x)$
and assume that the lemma is false. Then $\psi_{i^*}(x^{*})<0$ .
From the hypotheses we have $x^* \not\in\;\{0,1\}$ and
$\psi^{\prime \prime}_{i^*}(x^*)\geq 0$.
Thus
\begin{equation*}(\vec{L}\vec \psi(x^*))_{i^*}=
-\eps_{i^*}\psi^{\prime\prime}_{i^*}(x^*)+\sum_{j=1}^n
a_{i^*,j}(x^{*})\psi_j(x^*)<0,
\end{equation*} which contradicts the assumption and proves the
result for $\vec{L}$.\eop \end{proof}

Let $\tilde{A}(x)$ be any principal sub-matrix of $A(x)$ and $\vec{\tilde{L}}$
the corresponding operator. To see that any $\vec{\tilde{L}}$ satisfies the same
maximum principle as $\vec{L}$, it suffices to observe that the elements
of $\tilde{A}(x)$ satisfy \emph{a fortiori} the same inequalities as those of $A(x)$.

We remark that the maximum principle is not necessary for the
results that follow, but it is a convenient tool in their proof.
\begin{lemma}\label{stab} Let $A(x)$ satisfy (\ref{a1}) and (\ref{a2}). If $\vec{\psi}$ is any function in the domain of $\;\vec L,\;$
 then for each $i, \; 1 \leq i \leq n $, \[|\vec{\psi}_i(x)| \le\;
\max\ds\left\{\parallel\vec \psi(0)\parallel,\parallel\vec \psi(1)\parallel, \dfrac{1}{\alpha}\parallel \vec L\vec \psi\parallel\right\},\s x\in [0,1].\]
\end{lemma}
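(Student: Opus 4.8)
The plan is to run the classical two-sided barrier argument built on the maximum principle of Lemma~\ref{max}. Put
\[
M \;=\; \max\left\{\|\vec{\psi}(0)\|,\ \|\vec{\psi}(1)\|,\ \tfrac{1}{\alpha}\|\vec L\vec{\psi}\|\right\},
\]
and introduce the two vector-valued barrier functions $\vec{\phi}^{\pm}(x) = M\vec e \pm \vec{\psi}(x)$, where $\vec e = (1,\ldots,1)^T$ lies in the domain of $\vec L$. Showing $\vec{\phi}^{\pm}(x) \ge \vec 0$ on $[0,1]$ is precisely the componentwise bound $|\psi_i(x)| \le M$, which is the assertion, so the whole proof reduces to checking the hypotheses of Lemma~\ref{max} for $\vec{\phi}^+$ and $\vec{\phi}^-$.

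First I would verify the boundary inequalities. At $x=0$ each component satisfies $(\vec{\phi}^{\pm})_i(0) = M \pm \psi_i(0) \ge M - |\psi_i(0)| \ge M - \|\vec{\psi}(0)\| \ge 0$ by the definition of $M$, and the same computation works at $x=1$. Hence $\vec{\phi}^{\pm}(0)\ge\vec 0$ and $\vec{\phi}^{\pm}(1)\ge\vec 0$.

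Next, and this is the only step with any content, I would check that $\vec L\vec{\phi}^{\pm} \ge \vec 0$ on $(0,1)$. Since $\vec e$ is constant, $(\vec{\phi}^{\pm})_i'' = \pm\,\psi_i''$, so
\[
(\vec L\vec{\phi}^{\pm})_i(x) \;=\; M\sum_{j=1}^n a_{ij}(x)\ \pm\ (\vec L\vec{\psi})_i(x).
\]
By hypothesis (\ref{a2}) we have $\sum_{j=1}^n a_{ij}(x) \ge \alpha$ for all $x$ and $i$, so $M\sum_{j} a_{ij}(x) \ge M\alpha \ge \|\vec L\vec{\psi}\| \ge |(\vec L\vec{\psi})_i(x)|$, and therefore $(\vec L\vec{\phi}^{\pm})_i(x) \ge 0$ for both choices of sign. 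Applying Lemma~\ref{max} to each of $\vec{\phi}^+$ and $\vec{\phi}^-$ gives $\vec{\phi}^{\pm}(x) \ge \vec 0$ on $[0,1]$, i.e. $-M \le \psi_i(x) \le M$ for every $i$ and every $x\in[0,1]$, as required.

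I do not expect a genuine obstacle: the argument is the standard barrier device. The one point that deserves care is recognising that it is hypothesis (\ref{a2}), rather than the weak diagonal dominance (\ref{a1}), that supplies the uniform positive lower bound $\alpha$ on the row sums and hence the factor $1/\alpha$ in the estimate; the inequalities (\ref{a1}) enter only through the validity of the maximum principle already established in Lemma~\ref{max}.
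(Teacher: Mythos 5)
Your proposal is correct and follows essentially the same route as the paper: the paper's proof defines exactly the barrier functions $\vec\theta^{\pm}(x)=M\vec e\pm\vec\psi(x)$ with the same constant $M$ and invokes Lemma \ref{max}, leaving the verification of $\vec\theta^{\pm}(0)\ge\vec 0$, $\vec\theta^{\pm}(1)\ge\vec 0$ and $\vec L\vec\theta^{\pm}\ge\vec 0$ to the reader, which you have spelled out correctly, including the key use of (\ref{a2}) for the lower bound $\alpha$ on the row sums.
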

\begin{proof}Define the two functions \[\vec \theta^\pm(x)\;=\;
\max\ds\left\{\parallel\vec \psi(0)\parallel,\;\parallel\vec
\psi(1)\parallel,\;\dfrac{1}{\alpha}\parallel\vec{L}\vec
\psi\parallel\right\}\vec e\;\pm\;\vec \psi(x)\] where $\;\vec
e\;=\;(1,\;\ldots,\;1)^T\;$ is the unit column vector. Using the
properties of $\;A\;$ it is not hard to verify that $\;\vec
\theta^\pm(0)\;\ge\;\vec 0,\;\;\vec \theta^\pm(1)\;\ge\;\vec 0\;$
and $\;\vec L\vec \theta^\pm(x)\;\ge\;\vec 0.\;$ It follows from
Lemma \ref{max} that $\;\vec \theta^\pm(x)\;\ge\;\vec 0\;$ for all
$\;x\;\in\;[0,\;1].\;$\eop
\end{proof}
A standard estimate of the exact solution and its derivatives is
contained in the following lemma.
\begin{lemma}\label{exact} Let $A(x)$ satisfy (\ref{a1}) and (\ref{a2})
and let $\vec u$ be the exact solution of (\ref{BVP}). Then, for each $i=1\; \dots \; n$, all $x\in [0,1]$ \; and \; $k=0,1,2$,
\[|u_i^{(k)}(x)| \leq
C\eps_i^{-\frac{k}{2}}(||\vec{u}(0)||+||\vec{u}(1)||+||\vec{f}||)\]
and
\[|u_i^{\prime\prime\prime}(x)| \leq
C\eps_i^{-\frac{3}{2}}(||\vec{u}(0)||+||\vec{u}(1)||+||\vec{f}||+||\vec{f}^\prime||)\]
\end{lemma}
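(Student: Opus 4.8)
The plan is to derive the zeroth-order bound directly from the stability estimate, and then to bootstrap the derivative bounds by a local rescaling (stretching) argument that reduces everything to an interval of unit length, where classical interior estimates for second-order ODEs apply. First, applying Lemma \ref{stab} to $\vec\psi=\vec u$ gives immediately
\[
|u_i(x)|\;\le\;\max\left\{\|\vec u(0)\|,\|\vec u(1)\|,\tfrac1\alpha\|\vec f\|\right\}\;\le\;C(\|\vec u(0)\|+\|\vec u(1)\|+\|\vec f\|),
\]
which is the case $k=0$. Write $M:=\|\vec u(0)\|+\|\vec u(1)\|+\|\vec f\|$ so that $\|\vec u\|\le CM$.

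For $k=1,2$ I would fix $i$ and a point $x_0\in[0,1]$, and work on the subinterval $N=[x_0-\sqrt{\eps_i},x_0+\sqrt{\eps_i}]\cap[0,1]$, which has length comparable to $\sqrt{\eps_i}$ (using \eqref{a3} to ensure $\sqrt{\eps_i}$ is small). Introduce the stretched variable $\xi$ by $x=x_0+\sqrt{\eps_i}\,\xi$ and set $U(\xi)=u_i(x_0+\sqrt{\eps_i}\,\xi)$. From the $i$-th equation of \eqref{BVP},
\[
-\eps_i u_i''(x)=f_i(x)-\sum_{j=1}^n a_{ij}(x)u_j(x),
\]
so after rescaling $U''(\xi)=-f_i+\sum_j a_{ij}u_j$, and hence $\|U''\|_{\infty}\le C(\|\vec f\|+\|A\|_{\infty}\|\vec u\|)\le CM$ on the rescaled interval, while $\|U\|_\infty\le CM$ too. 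A classical one-dimensional interpolation/interior estimate — e.g. apply the mean value theorem on a subinterval of length $\ge$ const to find a point where $|U'|\le C(\|U\|_\infty+\|U''\|_\infty)$, then integrate $U''$ — yields $|U'(0)|\le C(\|U\|_\infty+\|U''\|_\infty)\le CM$. Translating back, $U'(0)=\sqrt{\eps_i}\,u_i'(x_0)$, whence $|u_i'(x_0)|\le C\eps_i^{-1/2}M$; since $x_0$ was arbitrary this is the $k=1$ bound. For $k=2$ one reads it off directly from the differential equation: $|u_i''(x_0)|=\eps_i^{-1}\big|f_i(x_0)-\sum_j a_{ij}(x_0)u_j(x_0)\big|\le C\eps_i^{-1}M\le C\eps_i^{-1}M$, which is stronger than the claimed $C\eps_i^{-1}M$... indeed equals the stated $\eps_i^{-2/2}=\eps_i^{-1}$ bound.

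For the third-derivative estimate I would differentiate the $i$-th equation once: $-\eps_i u_i'''=f_i'-\sum_j(a_{ij}'u_j+a_{ij}u_j')$. Then
\[
|u_i'''(x)|\le \eps_i^{-1}\Big(\|\vec f'\|+C\|\vec u\|+C\max_j|u_j'(x)|\Big)\le \eps_i^{-1}\Big(CM+\|\vec f'\|+C\eps_i^{-1/2}M\Big),
\]
and since $\eps_i\le 1$ the dominant term is $C\eps_i^{-3/2}(M+\|\vec f'\|)$, giving the asserted bound. The main subtlety to watch is the boundedness of $U'$ in the rescaled picture: one must be careful that the subinterval on which the mean value theorem is applied genuinely has length bounded below by an absolute constant after rescaling, which is exactly what the choice of width $\sqrt{\eps_i}$ and assumption \eqref{a3} guarantee (so that $N$ is a genuine interval, not degenerate, even near the endpoints $0$ and $1$); near an endpoint one simply uses a one-sided stretched interval of length $\sqrt{\eps_i}$ and the given boundary datum in place of one of the interior values. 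No maximum-principle argument is needed for this lemma, only the stability bound of Lemma \ref{stab} and elementary calculus on the rescaled variable.
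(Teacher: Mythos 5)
Your proposal follows essentially the same route as the paper's proof: the case $k=0$ comes from Lemma \ref{stab}; the case $k=1$ comes from the mean value theorem on an interval of width $\sqrt{\eps_i}$ containing the point, followed by integrating $u_i''=\eps_i^{-1}((A\vec u)_i-f_i)$ over that interval (your stretching to the unit scale is just a change-of-variables rephrasing of exactly this computation); and the cases $k=2,3$ are read off from the differential equation and its first derivative, as in the paper.

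One step in your write-up deserves a caveat. In the third-derivative estimate you bound the coupling term by $\max_j|u_j'(x)|\le C\eps_i^{-1/2}M$, where $M=||\vec{u}(0)||+||\vec{u}(1)||+||\vec{f}||$; but the $k=1$ bound only gives $|u_j'(x)|\le C\eps_j^{-1/2}M$, and since $\eps_1<\dots<\eps_n$ the maximum over $j$ is controlled by $C\eps_1^{-1/2}M$, not by $C\eps_i^{-1/2}M$, when $i>1$. As written, your chain therefore yields $|u_i'''(x)|\le C\eps_i^{-1}\eps_1^{-1/2}\,(M+||\vec{f}^\prime||)$, which for $i>1$ is weaker than the stated componentwise rate $C\eps_i^{-3/2}(\cdot)$. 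To be fair, the paper's own proof is equally terse here (it simply states that ``the bounds on $u_i^{\prime\prime}$, $u_i^{\prime\prime\prime}$ follow'' from $\vec u^{\prime\prime\prime}=E^{-1}(A\vec u^\prime+A^\prime\vec u-\vec f^\prime)$), so you have reproduced its argument faithfully; but if you want the claimed $\eps_i^{-3/2}$ bound for every component you need a further argument at this point, whereas only the zeroth- and first-derivative bounds of this lemma are actually invoked later (e.g.\ in the proof of Lemma \ref{singular}).
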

\begin{proof}
The bound on $\vec{u}$ is an immediate consequence of Lemma
\ref{stab} and the differential equation.\\ To bound
$u_i^\prime(x)$, for all $i$ and any $x$, consider an interval
$N_x=[a,a+\sqrt{\eps_i}]$ such that $x \in N_x$. Then, by the mean
value theorem, for some $y\in N_x$,
\[u_i^\prime(y)=\frac{u_i(a+\sqrt{\eps_i})-u_i(a)}{\sqrt{\eps_i}}\]
and it follows that \[|u_i^\prime(y)|\leq
2\eps_i^{-\frac{1}{2}}||u_i||.
\]
Now
\[\vec{u}^\prime(x)=\vec{u}^\prime(y)+\int_y^x \vec{u}^{\prime\prime}(s)ds=
\vec{u}^\prime(y)+E^{-1}\int_y^x(-\vec{f}(s)+A(s)\vec{u}(s))ds\] and
so
\[|u_i^\prime(x)|\leq |u_i^\prime(y)|+C\eps_i^{-1}(||f_i||+||\vec u||)\int_y^x ds
\leq C\eps_i^{-\frac{1}{2}}(||f_i||+||\vec u||)\] from which the
required bound follows.\\
Rewriting and differentiating the differential equation gives $\vec u^{\prime\prime}= E^{-1}(A\vec u-\vec f), $\;  \; $\vec u^{\prime\prime\prime}= E^{-1}(A\vec
u^\prime+A^\prime\vec u-\vec f^\prime),$ and the bounds on $u_i^{\prime\prime}$, $u_i^{\prime\prime\prime}$ follow.\eop\end{proof}

The reduced solution $\vec{u}_0$ of (\ref{BVP}) is the solution of the reduced equation $A\vec {u}_0=\vec f$. The  Shishkin decomposition of the exact solution
$\;\vec{u}\;$ of (\ref{BVP}) is $\;\vec{u}=\vec{v}+\vec{w}\;$ where the smooth component $\;\vec v\;$ is the solution of $\;\vec L\vec v = \vec f\;$ with $\;\vec
v(0) = \vec u_0(0)\;$ and $\;\vec v(1) = \vec u_0(1)\;$ and  the singular component $\;\vec w\;$ is the solution of $\vec L\vec w\;=\;\vec 0$ with $\vec w(0)=\vec
u(0)-\vec v(0)$ and $\vec w(1)=\vec u(1)-\vec v(1).$ For convenience the left and right boundary layers of $\vec w$ are separated using the further decomposition
$\vec w =\vec{w}^l+\vec{w}^r$ where $\vec{L}\vec{w}^l=\vec{0},\; \vec{w}^l(0)=\vec u(0)-\vec v(0),\; \vec{w}^l(1)=\vec{0}$ and $\vec{L}\vec{w}^r= \vec{0},\;
\vec{w}^r(0)=\vec{0},\; \vec{w}^r(1)=\vec u(1)-\vec v(1).$\\Bounds on the smooth component and its derivatives are contained in
\begin{lemma}\label{smooth} Let $A(x)$ satisfy (\ref{a1}) and (\ref{a2}).
Then the smooth component $\vec v$ and its derivatives satisfy, for all $x\in [0,1]$, \; $i=1,\; \dots \; n$ \;and \; $k=0, \;\dots \;3$,
\[|v_i^{(k)}(x)| \leq C(1+\eps_i^{1-\frac{k}{2}}).\]
\end{lemma}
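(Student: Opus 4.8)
The plan is to prove the three cases $k=0$, then $k=1,2$ jointly, and finally $k=3$, using only Lemma \ref{stab}, an elementary interpolation inequality on $[0,1]$, and a mean value argument of the same type as in the proof of Lemma \ref{exact}; no further decomposition of $\vec v$ is needed. For $k=0$ one applies Lemma \ref{stab} directly to $\vec v$: since $\vec L\vec v=\vec f$ and $\vec v(0)=\vec u_0(0),\ \vec v(1)=\vec u_0(1)$ with $\vec u_0=A^{-1}\vec f$ bounded by (\ref{a1})--(\ref{a2}), one obtains $\|\vec v\|\le C$, which already gives the stated estimate for $k=0$.

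The key observation for $k=1,2$ is that $\vec v''(0)=\vec v''(1)=\vec 0$. Indeed $\vec v''=E^{-1}(A\vec v-\vec f)$ and at $x=0$ we have $A(0)\vec v(0)=A(0)\vec u_0(0)=\vec f(0)$, so $\vec v''(0)=\vec 0$, and similarly at $x=1$. Differentiating the differential equation twice gives $\vec L(\vec v'')=\vec f''-2A'\vec v'-A''\vec v$, so Lemma \ref{stab} applied to $\vec v''$ yields $\|\vec v''\|\le C_1+C_2\|\vec v'\|$ with $C_1,C_2$ independent of all parameters. On the other hand, a Taylor expansion of $v_i$ about each point $x$ to a point at distance $\eta$ gives the interpolation bound $\|\vec v'\|\le\frac{2}{\eta}\|\vec v\|+\frac{\eta}{2}\|\vec v''\|$ for every $\eta\in(0,\frac12]$. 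Substituting this into the previous inequality and choosing $\eta\in(0,\frac12]$ so small that $C_2\eta\le 1$ lets one absorb the term containing $\|\vec v''\|$ into the left-hand side, giving $\|\vec v''\|\le C$ and hence $\|\vec v'\|\le C$. Since $1+\eps_i^{1-k/2}\ge 1$, these bounds imply the stated estimates for $k=1$ and $k=2$.

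For $k=3$ I would argue as in Lemma \ref{exact}. Differentiating once more, $v_i''''=\eps_i^{-1}\bigl((A\vec v'')_i+2(A'\vec v')_i+(A''\vec v)_i-f_i''\bigr)$, whence $|v_i''''|\le C\eps_i^{-1}$ by the bounds already obtained. Given $x\in[0,1]$, choose an interval $N_x=[a,a+\sqrt{\eps_i}]\subseteq[0,1]$ with $x\in N_x$ (possible since $\sqrt{\eps_i}\le 1$). By the mean value theorem there is $\xi\in N_x$ with $v_i'''(\xi)=\bigl(v_i''(a+\sqrt{\eps_i})-v_i''(a)\bigr)/\sqrt{\eps_i}$, so $|v_i'''(\xi)|\le 2\eps_i^{-1/2}\|v_i''\|\le C\eps_i^{-1/2}$, and then $|v_i'''(x)|\le|v_i'''(\xi)|+\int_{N_x}|v_i''''|\le C\eps_i^{-1/2}+C\eps_i^{-1}\sqrt{\eps_i}=C\eps_i^{-1/2}$, which is the claimed bound for $k=3$.

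The point requiring most care is the bootstrap for $k=1,2$: one must verify that the constants $C_1,C_2$ in $\|\vec v''\|\le C_1+C_2\|\vec v'\|$ and in the interpolation inequality are genuinely independent of the $\eps_i$, so that the absorption step is valid. This is what makes the estimate parameter-uniform, and it is also what lets the whole proof dispense with Green's functions and with any scale-by-scale decomposition of the smooth component.
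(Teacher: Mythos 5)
Your proof is correct and follows essentially the same route as the paper: direct application of Lemma \ref{stab} for $k=0$, then the observation $\vec v''(0)=\vec v''(1)=\vec 0$ together with $\vec L\vec v''=\vec f''-A''\vec v-2A'\vec v'$, Lemma \ref{stab}, and a Taylor/interpolation absorption argument to get uniform bounds for $k=1,2$. For $k=3$ the paper merely says ``a similar argument''; your explicit treatment via $|v_i''''|\le C\eps_i^{-1}$ and a mean-value step on an interval of length $\sqrt{\eps_i}$ (mirroring the proof of Lemma \ref{exact}) is a valid instantiation of that remark, and your care that the interpolation constant and $C_1,C_2$ are $\eps$-independent is exactly the point needed.
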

\begin{proof}
The bound on $\vec{v}$ is an immediate consequence of the defining equations for $\vec{v}$ and Lemma
\ref{stab}.\\
The bounds on $\vec{v}^{\prime}$ and $\vec{v}^{\prime\prime}$ are found as follows. Differentiating twice the equation for $\vec{v}$, it is not hard to see that
$\vec{v}^{\prime\prime}$ satisfies \[ \vec{L}\vec{v}^{\prime\prime}=\vec{g},\;\; \text{where} \;\; \vec{g}=
\vec{f}^{\prime\prime}-A^{\prime\prime}\vec{v}-2A^{\prime}\vec{v}^{\prime.}\] Also the defining equations for $\vec{v}$ yield at $x=0,\;\; x=1$
\[\vec{v}^{\prime\prime}(0)=\vec{0}, \;\; \vec{v}^{\prime\prime}(1)=\vec{0}. \] Applying Lemma \ref{stab} to $\vec{v}^{\prime\prime}$ then gives
\begin{equation}\label{smooth1} ||\vec{v}^{\prime\prime}|| \leq C(1+||\vec{v}^{\prime}||). \end{equation}
Choosing $i^*,\; x^*,$ such that $1 \leq i^* \leq n,\; x^* \in (0,1)$ and
\begin{equation}\label{smooth2} v_{i^*}^{\prime}(x^*)=||\vec{v}^{\prime}|| \end{equation}
and using a Taylor expansion it follows that, for any $y \in [0,1-x^*]$ and some $\eta$, $x^*\;<\;\eta\;<\;x^*+y$,
\begin{equation}\label{smooth3} v_{i^*}(x^*+y) = v_{i^*}(x^*)+y\;\vec{v}'(x^*)+\dfrac{y^2}{2}\;v_{i^*}^{\prime\prime}(\eta).\end{equation}
Rearranging (\ref{smooth3}) yields
\begin{equation}\label{smooth4}v_{i^*}^{\prime}(x^*)=\frac{v_{i^*}(x^*+y)-v_{i^*}(x^*)}{y}-\frac{y}{2}v_{i^*}^{\prime\prime}(\eta)
\end{equation}
and so, from (\ref{smooth2}) and (\ref{smooth4}),
\begin{equation}\label{smooth5}
||\vec{v}^{\prime}|| \leq \frac{2}{y}||\vec{v}||+ \frac{y}{2}||\vec{v}^{\prime\prime}||.
\end{equation}
Using (\ref{smooth5}), (\ref{smooth1}) and the bound on $\vec{v}$ yields
\begin{equation}\label{smooth6}
(1-\frac{Cy}{2})||\vec{v}^{\prime\prime}|| \leq C(1+\frac{2}{y}).
\end{equation}
Choosing $y=\min(\frac{1}{C},1-x^*)$, (\ref{smooth6}) then gives $||\vec{v}^{\prime\prime}|| \leq C$ and (\ref{smooth5}) gives $||\vec{v}^{\prime}|| \leq C$ as
required. The bound on $\vec{v}^{\prime\prime\prime}$ is obtained by a similar argument.\eop\end{proof} The layer functions
$\;B^{l}_{i},\;B^{r}_{i}, 
\; i=1,\; \dots , \; n,\;$,
associated with the solution $\;\vec u$, are defined on $[0,1]$ by
\[B^{l}_{i}(x) = e^{-x\sqrt{\alpha/\eps_i}},\;B^{r}_{i}(x) =
B^{l}_{i}(1-x).\]
The following elementary properties of these layer functions, for
all $1 \leq i < j \leq n$ and $0 \leq x < y \leq
1$, should be noted:\\
(i)\;\;\;$B^{l}_i(x)\; <\; B^{l}_j(x),\;\;B^{l}_i(x)\;
>\; B^{l}_i(y), \;\;0\;<\;B^{l}_i(x)\;\leq\;1$.\\
(ii)\;\;$B^{r}_i(x)\; <\; B^{r}_j(x),\;\;B^{r}_i(x)\; <\;
B^{r}_i(y), \;\;0\;<\;B^{r}_i(x)\;\leq\;1$.

Bounds on the singular components $\vec{w}^l,\; \vec{w}^r$ of
$\vec{u}$ and their derivatives are contained in
\begin{lemma}\label{singular} Let $A(x)$ satisfy (\ref{a1}) and
(\ref{a2}).Then there exists a constant $C,$ such that, for each $x
\in [0,1]$ and $i=1,\; \dots , \; n$,
\[\left|w^l_i(x)\right| \;\le\; C B^l_{n}(x),\;\;
\left|w_i^{l,\prime}(x)\right| \;\le\; C\sum_{q=i}^n
\frac{B^l_{q}(x)}{\sqrt{\eps_q}},\]
\[\left|w_i^{l,\prime\prime}(x)\right| \;\le\; C\sum_{q=i}^n
\frac{B^l_{q}(x)}{\eps_q},\;\; \left|\eps_i
w_i^{l,\prime\prime\prime}(x)\right| \;\le\; C\sum_{q=1}^n
\frac{B^l_{q}(x)}{\sqrt{\eps_q}}.\] Analogous results hold for
$w^r_i$ and its derivatives.
\end{lemma}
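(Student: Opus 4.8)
The plan is to establish the four estimates for $\vec w^l$ and then obtain those for $\vec w^r$ from the reflection $x\mapsto 1-x$, which preserves (\ref{a1})--(\ref{a2}), interchanges $B_i^l$ with $B_i^r$, and maps $\vec L$ to an operator of the same type. First I would dispose of the zeroth-order bound by a barrier argument in the spirit of Lemma \ref{stab}: the vector functions $\vec\theta^{\pm}(x)=CB_n^l(x)\,\vec e\pm\vec w^l(x)$ satisfy $\vec\theta^{\pm}(0)\ge\vec 0$, $\vec\theta^{\pm}(1)\ge\vec 0$ once $C$ is a large enough multiple of $\|\vec w^l(0)\|$, and componentwise
\[(\vec L\vec\theta^{\pm}(x))_i=CB_n^l(x)\Big(\sum_{j=1}^{n}a_{ij}(x)-\frac{\eps_i\alpha}{\eps_n}\Big)\pm(\vec L\vec w^l(x))_i\ \ge\ CB_n^l(x)\Big(\sum_{j=1}^{n}a_{ij}(x)-\alpha\Big)\ \ge\ 0,\]
since $\eps_i\le\eps_n$, $\vec L\vec w^l=\vec 0$, and (\ref{a1})--(\ref{a2}); Lemma \ref{max} then gives $|w_i^l(x)|\le CB_n^l(x)$ for every $i$.

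The heart of the matter is the bound on $w_i^{l,\prime}$, which—together with the bound on $w_i^{l,\prime\prime}$—I would prove by induction on the number $n$ of equations. For $n=1$ the problem is a scalar reaction--diffusion equation and the estimates follow by applying the mean value theorem to $w_1^l$ on a subinterval of $[0,1]$ of length $\sqrt{\eps_1}$, then integrating $w_1^{l,\prime\prime}=\eps_1^{-1}a_{11}w_1^l$ and using the zeroth-order bound together with the identity $B_1^l(x+\sqrt{\eps_1})=e^{-\sqrt\alpha}B_1^l(x)$. For the inductive step I would single out the component with the largest parameter, $w_n^l$, whose layer $B_n^l$ is the mildest: its equation is scalar with known forcing $-\sum_{j<n}a_{nj}w_j^l=O(B_n^l)$, so the scalar argument gives $|w_n^{l,\prime}(x)|\le CB_n^l(x)/\sqrt{\eps_n}$ and $|w_n^{l,\prime\prime}(x)|\le CB_n^l(x)/\eps_n$. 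The equations for the remaining $n-1$ components then form an $(n-1)\times(n-1)$ system still satisfying (\ref{a1})--(\ref{a2}) with the same $\alpha$—removing the entry $a_{in}\le0$ only increases the row sums—forced by $-a_{in}w_n^l=O(B_n^l)$. I would split its solution as: (i) the solution of the \emph{homogeneous} $(n-1)$-system with the given boundary data, to which the induction hypothesis applies and which contributes $\sum_{q=i}^{n-1}B_q^l/\sqrt{\eps_q}$; plus (ii) a particular response to the forcing with boundary data matched to the reduced solution, which carries only the mild layer $B_n^l$ and hence, by further barrier arguments, is $O(B_n^l)$ with derivatives $O(B_n^l/\sqrt{\eps_n})$ and $O(B_n^l/\eps_n)$; plus (iii) a \emph{homogeneous} layer-correction absorbing the boundary mismatch left by (ii), again controlled by the induction hypothesis. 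Adding the three contributions reproduces $|w_i^{l,\prime}(x)|\le C\sum_{q=i}^{n}B_q^l(x)/\sqrt{\eps_q}$ and $|w_i^{l,\prime\prime}(x)|\le C\sum_{q=i}^{n}B_q^l(x)/\eps_q$ for $i<n$. The truncation of these sums below at $q=i$ is exactly the statement that component $i$ cannot sustain a layer sharper than its own $B_i^l$; keeping track of which of the competing terms $B_q^l/\sqrt{\eps_q}$ dominates at a given $x$—needed both to check the barriers in (ii) and to combine the contributions—is where the ordering of the crossover points $x_{i,j}$ of Lemma \ref{xs} enters.

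The third-derivative bound is then a purely algebraic consequence of the first three: differentiating $\eps_i w_i^{l,\prime\prime}=\sum_j a_{ij}w_j^l$ gives $\eps_i w_i^{l,\prime\prime\prime}=\sum_j(a_{ij}'w_j^l+a_{ij}w_j^{l,\prime})$, and inserting $|w_j^l|\le CB_n^l$ and $|w_j^{l,\prime}|\le C\sum_{q=j}^{n}B_q^l/\sqrt{\eps_q}$ for \emph{all} $j$ and collapsing the double sum $\sum_{j=1}^{n}\sum_{q=j}^{n}$ (in which $q$ reaches $1$ when $j=1$) yields $|\eps_i w_i^{l,\prime\prime\prime}(x)|\le C\sum_{q=1}^{n}B_q^l(x)/\sqrt{\eps_q}$—which is why this last sum runs from $q=1$ rather than from $q=i$.

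I expect the genuine obstacle to be the inductive step for $w^{l,\prime}$ and $w^{l,\prime\prime}$: organising the three-part decomposition of the peeled subsystem cleanly (choosing the matched boundary data for the particular response in (ii), and recognising the correction in (iii) as genuinely homogeneous so that the induction hypothesis applies), verifying that the candidate layer-function barriers are super- and subsolutions uniformly in $x\in[0,1]$ by means of Lemma \ref{xs}, and making sure the generic constant $C$ does not deteriorate as the peeling is iterated. By contrast, the zeroth-order and third-order estimates are short, and the passage to $\vec w^r$ is immediate.
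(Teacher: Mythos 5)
Your plan reproduces the paper's proof in all essentials: the same barrier function $CB^l_n\vec e\pm\vec w^l$ for the zeroth-order bound, induction on $n$ in which the $n$-th component is treated first and the remaining $(n-1)\times(n-1)$ subsystem, forced by $g_i=-a_{i,n}w^l_n=O(B^l_n)$, is split into a smooth/particular part (with derivative bounds $CB^l_n/\sqrt{\eps_n}$, $CB^l_n/\eps_n$) plus homogeneous layer parts controlled by the induction hypothesis, and the third-derivative bound read off from the differentiated system exactly as you describe. The only deviations are cosmetic: the paper bounds $w_n^{l,\prime}$ via a barrier for $\vec L\vec w^{l,\prime}=-A'\vec w^l$ (with endpoint data from Lemma \ref{exact}) rather than your scalar mean-value argument, uses a two-part (smooth plus singular) rather than a three-part splitting of the subsystem, and does not need Lemma \ref{xs} at this stage — that ordering is only used later, in Lemma \ref{general}.
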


\begin{proof}First we obtain the bound on $\vec{w}^l$. We define the two
functions $\vec{\theta}^{\pm}=CB^l_n\vec{e} \pm \vec{w}^l$. Then clearly $\vec{\theta}^{\pm}(0) \geq \vec{0}, \;\; \vec{\theta}^{\pm}(1) \geq \vec{0}$ and
$L\vec{\theta}^{\pm}=CL(B^l_n\vec{e})$. Then, for $i=1,\dots, n$, $(L\vec{\theta}^{\pm})_i =C(\sum_{j=1}^{n}a_{i,j}-\alpha\frac{\eps_i}{\eps_n})B^l_n >0$. By Lemma
\ref{max}, $\vec{\theta}^{\pm}\geq \vec{0}$, which leads to the required bound on $\vec{w}^l$.

Assuming, for the moment, the bounds on the first derivatives $w_i^{l,\prime}$, the system of differential equations satisfied by $\vec{w}^l$ is differentiated to
get
\[-E\vec{w}^{l,\prime\prime\prime}+A\vec{w}^{l,\prime}+A^{\prime}\vec{w}^l =\vec{0}.\] The required
 bounds on the $w_i^{l,\prime\prime\prime}$ follow from those on $w^l_i$ and
$w_i^{l,\prime}$. It remains therefore to establish the bounds on
$w_i^{l,\prime}$ and $w_i^{l,\prime\prime}$, for which the following
mathematical induction argument is used. It is assumed that the
bounds hold for all systems up to order $n-1$. It is then shown that
the bounds hold for order $n$. The induction argument is completed
by observing that the bounds for the scalar case $n=1$ are proved in
\cite{MORS}.

It is now shown that under the induction hypothesis the required bounds hold for $w_i^{l,\prime}$ and $w_i^{l,\prime\prime}$. The bounds when i=n are established
first.The differential equation for $w^l_n$ gives $\eps_n w_n^{l,\prime\prime}=(A\vec{w}^l)_n$ and the required bound on $w_n^{l,\prime\prime}$ follows at once from
that for $\vec{w}^l$. For $w_n^{l,\prime}$ it is seen from the bounds in Lemma \ref{exact}, applied to the system satisfied by $\vec{w}^l$, that
$|w_i^{l,\prime}(x)| \leq C\eps_i^{-\frac{1}{2}}$. In particular, $|w_n^{l,\prime}(0)| \leq C\eps_n^{-\frac{1}{2}}$ and $|w_n^{l,\prime}(1)| \leq
C\eps_n^{-\frac{1}{2}}$. It is also not hard to verify that $\vec{L}\vec{w}^{l,\prime}=-A^{\prime}\vec{w}^l$. Using these results, the inequalities $\eps_i <
\eps_n, \; i<n $, and the properties of $A$, it follows that the two barrier functions $\vec{\theta}^{\pm}=CE^{-\frac{1}{2}}B^l_n \vec{e}\pm \vec{w}^{l,\prime}$
satisfy the inequalities $\vec{\theta}^{\pm}(0) \ge \vec{0},\; \vec{\theta}^{\pm}(1) \ge \vec{0} $, and $\vec{L} \vec{\theta}^{\pm} \ge \vec{0}.$ It follows from
Lemma \ref{max} that $ \vec{\theta}^{\pm} \ge \vec{0}$ and in particular that its $n^{th}$ component satisfies $|w_n^{l,\prime}(x)| \leq
C\eps_n^{-\frac{1}{2}}B^l_n(x)$ as required.

To bound $w_i^{l,\prime}$ and $w_i^{l,\prime\prime}$ for $1 \le i
\le n-1$ introduce $\tilde{\vec{w}}^l=(w^l_1, \dots,w^l_{n-1})$.
Then, taking the first $n-1$ equations satisfied by $\vec{w}^l$, it
follows that
\[-\tilde{E}\tilde{\vec{w}}^{l,\prime\prime}+\tilde{A}\tilde{\vec{w}}^l=\vec{g},\]
where $\tilde{E},\;\tilde{A}$ is the matrix obtained by deleting the
last row and column from $E,\;A$, respectively,  and the components
of $\vec{g}$ are $g_i=-a_{i,n}w^l_n$ for $1 \le i \le n-1$. Using
the bounds already obtained for $w^l_n, w_n^{l,\prime}$, it is seen
that $\vec{g}$ is bounded by $CB^l_n (x)$ and $\vec{g}^{\prime}$ by
$C\frac{B^l_n (x)}{\sqrt{\eps_n}}$. The boundary conditions for
$\tilde{\vec{w}}^l$ are
$\tilde{\vec{w}}^l(0)=\tilde{\vec{u}}(0)-\tilde{\vec{u}}^0(0)$,
$\tilde{\vec{w}}^l(1)=\vec{0}$, where $\vec{u}^0$ is the solution of
the reduced problem $\vec{u}^0=A^{-1}\vec{f}$, and are bounded by
$C(\parallel\vec{u}(0)\parallel+\parallel\vec{f}(0)\parallel)$ and
$C(\parallel\vec{u}(1)\parallel+\parallel\vec{f}(1)\parallel)$. Now
decompose $\tilde{\vec{w}}^l$ into smooth and singular components to
get
\[\tilde{\vec{w}}^l=\vec{q}+\vec{r}, \;\;\ \tilde{\vec{w}}^{l,\prime}=\vec{q}^{\prime}+\vec{r}^{\prime}.\]
Applying Lemma \ref{smooth}  to $\vec{q}$ and using the bounds on
the inhomogeneous term $\vec{g}$ and its derivative
$\vec{g}^{\prime}$, it follows that $|\vec{q}^{\prime}(x)| \leq
C\frac{B^l_n(x)}{\sqrt{\eps_n}}$ and $|\vec{q}^{\prime\prime}(x)|
\leq C\frac{B^l_n(x)}{\eps_n}$. Using mathematical induction, assume
that the result holds for all systems with $n-1$ equations. Then
Lemma \ref{singular} applies to $\vec{r}$ and so, for $i=1, \dots,
n-1$,
\[
|r^{\prime}_{i}(x)| \leq C\sum_{q=i}^{n-1} \frac{B^l_{q}(x)}{\sqrt{\eps_q}},\; |r^{\prime\prime}_{i}(x)| \leq C\sum_{q=i}^{n-1}\frac{B^l_{q}(x)}{\eps_q}.\]
Combining the bounds for the derivatives of $q_i$ and $r_i$, it follows that
\[
|w^{l,\prime}_{i}(x)| \leq C\sum_{q=i}^{n-1}
\frac{B^l_{q}(x)}{\sqrt{\eps_q}},\; |w^{l,\prime\prime}_{i}(x)| \leq
C\sum_{q=i}^{n-1}\frac{B^l_{q}(x)}{\eps_q}.\] Thus, the bounds on
$w_i^{l,\prime}$ and $w_i^{l,\prime\prime}$ hold for a system with
$n$ equations, as required. The proof of the analogous results for
the right boundary layer functions is analogous.\eop\end{proof}
\begin{definition}
For $B_i^l$, $B_j^l$ and each $1 \leq i \neq j \leq n$,  the point $x_{i,j}$ is defined by
\begin{equation}\label{x1}\frac{B^l_i(x_{i,j})}{\sqrt{\varepsilon_i}}=\frac{B^l_j(x_{i,j})}{\sqrt{\varepsilon_j}}. \end{equation}
\end{definition}
It is remarked that
\begin{equation}\label{x2}\frac{B^r_i(1-x_{i,j})}{\sqrt{\varepsilon_i}}=\frac{B^r_j(1-x_{i,j})}{\sqrt{\varepsilon_j}}. \end{equation}

In the next lemma it is shown that the points $x_{i,j}$ exist, are uniquely defined, lie in the domain $[0,\frac{1}{2}]$ and have an interesting ordering.
\begin{lemma}\label{xs} Assume that
(\ref{a3}) holds. If, in addition, $\sqrt{\eps_i} \leq\sqrt{\eps_j}/2,$ then, for all $i,j$ with $1 \leq i < j \leq n$, the points $x_{i,j}$ exist, are uniquely
defined, lie in $(0,\frac{1}{2}]$ and satisfy the following inequalities
\begin{equation}\label{x3}
\frac{B^l_{i}(x)}{\sqrt{\eps_i}} > \frac{B^l_{j}(x)}{\sqrt{\eps_j}},\;\; x \in [0,x_{i,j}),\;\; \frac{B^l_{i}(x)}{\sqrt{\eps_i}} < \frac{B^l_{j}(x)}{\sqrt{\eps_j}},
\; x \in (x_{i,j}, 1].\end{equation} In addition the following ordering holds
\begin{equation}\label{x4}x_{i,j}< x_{i+1,j}, \; \mathrm{if} \;\; i+1<j \;\;
\mathrm{and} \;\; x_{i,j}<
x_{i,j+1}, \;\; \mathrm{if} \;\; i<j. \end{equation} \\
Analogous results hold for the $B^r_i$, $B^r_j$ and the points $1-x_{i,j}.$
\end{lemma}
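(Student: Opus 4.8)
The plan is to pass to logarithms, which turns the defining relation~(\ref{x1}) into an affine equation: existence, uniqueness and the sign statement~(\ref{x3}) then drop out at once, together with an explicit formula for $x_{i,j}$. The containment $x_{i,j}\in(0,\tfrac12]$ I would read off that formula using the mean value theorem and~(\ref{a3}), and the ordering~(\ref{x4}) I would get by recognising $\sqrt{\alpha}\,x_{i,j}$ as a secant slope of the concave function $\ln$. So fix $1\le i<j\le n$ (hence $\eps_i<\eps_j$) and set
\[
\psi(x)=\ln\frac{B^l_i(x)}{\sqrt{\eps_i}}-\ln\frac{B^l_j(x)}{\sqrt{\eps_j}}
= x\Bigl(\sqrt{\tfrac{\alpha}{\eps_j}}-\sqrt{\tfrac{\alpha}{\eps_i}}\Bigr)+\tfrac12\ln\frac{\eps_j}{\eps_i}.
\]
Since $\eps_i<\eps_j$, $\psi$ is affine with strictly negative slope and $\psi(0)=\tfrac12\ln(\eps_j/\eps_i)>0$, so it has a unique zero; as $t\mapsto e^t$ is increasing this zero is precisely the point $x_{i,j}$ of~(\ref{x1}), and $\psi>0$ to its left, $\psi<0$ to its right, which on exponentiating is~(\ref{x3}). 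Solving $\psi(x_{i,j})=0$ gives $x_{i,j}=\dfrac{\ln(\eps_j/\eps_i)}{2\sqrt{\alpha}\,(\eps_i^{-1/2}-\eps_j^{-1/2})}>0$.

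For the location I would substitute $\mu_k=\eps_k^{-1/2}$, so $\mu_1>\cdots>\mu_n>0$ and the formula becomes $\sqrt{\alpha}\,x_{i,j}=(\ln\mu_i-\ln\mu_j)/(\mu_i-\mu_j)$, the slope of a secant of $\ln$ over $[\mu_j,\mu_i]$. By the mean value theorem this equals $1/\xi$ for some $\xi\in(\mu_j,\mu_i)$, hence $\sqrt{\eps_i}<\sqrt{\alpha}\,x_{i,j}<\sqrt{\eps_j}$; in particular, by~(\ref{a3}), $0<x_{i,j}<\sqrt{\eps_j}/\sqrt{\alpha}\le\tfrac14$, so $x_{i,j}\in(0,\tfrac12]$ (the extra hypothesis $\sqrt{\eps_i}\le\sqrt{\eps_j}/2$ is not needed here).

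For the ordering I would use the elementary fact that, for a strictly concave $g$, the secant slope $(g(b)-g(a))/(b-a)$ over $[a,b]$ is strictly decreasing in each of $a$ and $b$: for $a<a'<b$ the point $(a',g(a'))$ lies strictly above the chord through $(a,g(a))$ and $(b,g(b))$, so the chord from $(a',g(a'))$ to the common endpoint $(b,g(b))$ has strictly smaller slope, and symmetrically in the right endpoint. With $g=\ln$ and the formula above: when $i<j$, passing from $x_{i,j}$ to $x_{i,j+1}$ replaces the left endpoint $\mu_j$ by the smaller $\mu_{j+1}$ while keeping $\mu_i$, so the slope, and hence $x_{i,j+1}$, strictly increases; when $i+1<j$, passing from $x_{i,j}$ to $x_{i+1,j}$ replaces the right endpoint $\mu_i$ by the smaller $\mu_{i+1}$ while keeping $\mu_j$ (note $\mu_j<\mu_{i+1}$, so the secant is still well defined), so again $x_{i+1,j}>x_{i,j}$; this is~(\ref{x4}). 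Finally, since $B^r_k(x)=B^l_k(1-x)$, the change of variable $x\mapsto1-x$ carries~(\ref{x1}) at $x_{i,j}$ into the corresponding identity for the $B^r_k$ at $1-x_{i,j}$, i.e.~(\ref{x2}), and converts the three steps above into the analogous statements, with the inequalities in $x$ reversed and $1-x_{i,j}\in[\tfrac34,1)$. Every step except the ordering is a direct calculation; the ordering is the crux, and the point to get right is the identification of $\sqrt{\alpha}\,x_{i,j}$ with a secant slope of $\ln$, after which strict concavity finishes it.
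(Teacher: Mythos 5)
Your proof is correct, and it reproduces the paper's skeleton for existence, uniqueness, (\ref{x3}) and the explicit formula: the paper works with the ratio $(\sqrt{\eps_j}/\sqrt{\eps_i})\exp\bigl(-\sqrt{\alpha}\,x(\eps_i^{-1/2}-\eps_j^{-1/2})\bigr)$, which is exactly your affine $\psi$ exponentiated, and arrives at the same expression (\ref{x5}) for $x_{i,j}$. You diverge at the two remaining steps. For the containment, the paper invokes the hypothesis $\sqrt{\eps_i}\le\sqrt{\eps_j}/2$ to bound the denominator and then uses $\ln t<t$, obtaining $x_{i,j}<2\sqrt{\eps_j}/\sqrt{\alpha}\le\frac12$; your mean value theorem argument gives the sharper two-sided bound $\sqrt{\eps_i}<\sqrt{\alpha}\,x_{i,j}<\sqrt{\eps_j}$, hence $x_{i,j}<\frac14$ from (\ref{a3}) alone, and you are right that the extra hypothesis is not needed for this part. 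For the ordering (\ref{x4}), the paper substitutes $\eps_k=\exp(-p_k)$, reduces $x_{i,j}<x_{i+1,j}$ to the monotonicity of $(e^t-1)/t$, and disposes of the second inequality ``by a similar argument''; you instead read $\sqrt{\alpha}\,x_{i,j}$ as the secant slope of the strictly concave $\ln$ over $[\mu_j,\mu_i]$ with $\mu_k=\eps_k^{-1/2}$ and apply the three-chord property, which treats both inequalities symmetrically in one stroke (your remark that $\mu_j<\mu_{i+1}$ keeps the interval nondegenerate is the one point that needs saying, and you say it). The two ordering arguments rest on the same underlying fact -- concavity of $\ln$, equivalently convexity of $\exp$ -- so the difference is one of packaging, but yours is the more uniform and slightly more economical version, and it buys a marginally stronger statement since the auxiliary hypothesis $\sqrt{\eps_i}\le\sqrt{\eps_j}/2$ is never used.
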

\begin{proof} Existence, uniqueness and (\ref{x3}) follow
from the observation that $\sqrt{\eps_i}<\sqrt{\eps_j}$, for $i<j$, and the ratio of the two sides of (\ref{x1}), namely
\[\frac{B^l_{i}(x)}{\sqrt{\eps_i}}\frac{\sqrt{\eps_j}}{B^l_{j}(x)}=
\frac{\sqrt{\eps_j}}{\sqrt{\eps_i}}\exp{(-\sqrt{\alpha} x(\frac{1}{\sqrt{\eps_i}}-\frac{1}{\sqrt{\eps_j}}))},\] is monotonically decreasing from the value
$\frac{\sqrt{\eps_j}}{\sqrt{\eps_i}} >1$ as $x$ increases
from $0$.\\
The point $x_{i,j}$ is the unique point $x$ at which this ratio has the value $1.$ Rearranging (\ref{x1}) gives
\begin{equation}\label{x5}x_{i,j}=
\frac{\ln(\frac{1}{\sqrt{\eps_i}})-\ln(\frac{1}{\sqrt{\eps_j}})}{\sqrt{\alpha}(\frac{1}{\sqrt{\eps_i}}-\frac{1}{\sqrt{\eps_j}})}=
\frac{\ln(\frac{\sqrt{\eps_j}}{\sqrt{\eps_i}})}{\sqrt{\alpha}(\frac{1}{\sqrt{\eps_i}}-\frac{1}{\sqrt{\eps_j}})}.\end{equation} Using the hypotheses it follows that
\[x_{i,j}< \frac{2\sqrt{\eps_i}}{\sqrt{\alpha}}\ln(\frac{\sqrt{\eps_j}}{\sqrt{\eps_i}})<
\frac{2\sqrt{\eps_i}}{\sqrt{\alpha}}\frac{\sqrt{\eps_j}}{\sqrt{\eps_i}}=\frac{2\sqrt{\eps_j}}{\sqrt{\alpha}}< \frac{1}{2}\] as required.\\
To prove (\ref{x4}), returning to (\ref{x5}) and writing $\eps_k = \exp(-p_k)$, for some $p_k
> 0$ and all $k$, it follows that
\[x_{i,j}=\frac{p_i -p_j}{\sqrt{\alpha}(\exp{p_i} -\exp{p_j})}.\] The
inequality $x_{i,j}< x_{i+1,j}$ is equivalent to
\[\frac{p_i -p_j}{\exp{p_i} -\exp{p_j}}<\frac{p_{i+1} -p_j}{\exp{p_{i+1}} -\exp{p_j}}, \]
which can be written in the form
\[(p_{i+1}-p_j)\exp(p_i-p_j)+(p_{i}-p_{i+1})-(p_{i}-p_j)\exp(p_{i+1}-p_j)>0. \]
With $a=p_i-p_j$ and $b=p_{i+1}-p_j$ it is not hard to see that $a>b>0$ and $a-b=p_i-p_{i+1}$. Moreover, the previous inequality is then equivalent to
\[\frac{\exp{a}-1}{a}>\frac{\exp{b}-1}{b}, \] which is true because $a>b$ and proves the first part of
(\ref{x4}). The second part is proved by a similar argument.\\ The analogous results for the $B^r_i$, $B^r_j$ and the points $1-x_{i,j}$ are proved by a similar argument.\\
\end{proof}
\section{The discrete problem}
A piecewise uniform mesh with $N$ mesh-intervals and mesh-points $\{x_i\}_{i=0}^N$ is now constructed by dividing the interval $[0,1]$ into $2n+1$ sub-intervals as
follows
\[[0,\sigma_1]\cup\dots\cup(\sigma_{n-1},\sigma_n]\cup(\sigma_n,1-\sigma_n]\cup(1-\sigma_n,1-\sigma_{n-1}]\cup\dots\cup(1-\sigma_1,1].\]
The $n$ transition parameters, which determine the points separating the uniform meshes, are defined by
\begin{equation}\label{sigma1}\sigma_{n}=\min\displaystyle\left\{\frac{1}{4},\sqrt{\frac{\eps_n}{\alpha}}\ln N\right\}\end{equation} and for $\;i=1,\;\dots \; ,n-1$
\begin{equation}\label{sigma2}\sigma_{i}=\min\displaystyle\left\{\frac{\sigma_{i+1}}{2},\sqrt{\frac{\eps_i}{\alpha}}\ln N\right\}.\end{equation} Clearly \[
0\;<\;\sigma_1\;<\;\dots\;<\;\sigma_n\;\le\;\frac{1}{4}, \qquad \frac{3}{4}\leq 1-\sigma_n < \; \dots \; 1-\sigma_1 <1.\] Then, on the sub-interval
$\;(\sigma_n,1-\sigma_n]\;$ a uniform mesh with $\;\frac{N}{2}\;$ mesh-intervals is placed, on each of the sub-intervals
$\;(\sigma_i,\sigma_{i+1}]\;\tx{and}\;(1-\sigma_{i+1},1-\sigma_i],\;\;i=1,\dots,n-1,\;$ a uniform mesh of $\;\frac{N}{2^{n-i+2}}\;$ mesh-intervals is placed and on
both of the sub-intervals $\;[0,\sigma_1]\;$ and $\;(1-\sigma_1,1]\;$ a uniform mesh of $\;\frac{N}{2^{n+1}}\;$ mesh-intervals is placed. In practice it is
convenient to take $N=2^{n}k$ where $k$
 is some positive power of 2. This construction leads to a class of $2^n$ piecewise uniform Shishkin
 meshes $M_{\vec{b}}$, where $\vec b$ denotes an $n$--vector with
$b_i=0$ if $\sigma_i=\frac{\sigma_{i+1}}{2}$ and $b_i=1$ otherwise. Note that $M_{\vec{b}}$ is a classical uniform mesh when $\vec b = \vec 0.$ It is not hard to
see also that
\begin{equation}\label{geom-1}
\sigma_i=2^{-(j-i+1)}\sigma_{j+1}\;\mathrm{when}\; b_i=\dots =b_j
=0.
\end{equation}
\begin{equation}\label{geom0}
B^l_i(\sigma_i)=B^r_i(1-\sigma_i)=N^{-1} \;\mathrm{when}\; b_i=1.
\end{equation}
 Writing $\delta_j=x_{j+1}-x _{j-1}$ note that, on any $M_{\vec b}$,
\begin{equation}\label{geom1}\delta_j \leq
CN^{-1}, \;\;\; 1 \leq j \leq N-1 \end{equation}
 and
\begin{equation}\label{geom2}\sigma_i \leq C \sqrt{\eps_i} \ln N, \; \;\; 1 \leq i \leq n. \end{equation}
Furthermore,
\begin{equation}\label{geom3} B_i^l(\sigma_i-(x_j-x_{j-1}))\leq CB_i^l(\sigma_i) \;\;\mathrm{if} \;\; x_j=\sigma_i,\;\; \mathrm{for\; some} \;\; i, j.
\end{equation}
To verify (\ref{geom3}) note that if $x_j=\sigma_i$ then $x_j-x_{j-1}=\frac{\sigma_i-\sigma_{i-1}}{(N/2^{n-i+2})}\leq N^{-1}\sigma_i 2^{n-i+2}$ and the result
follows from $B^l_i(\sigma_i-(x_j-x_{j-1}))\leq B^l_i(\sigma_i-\frac{N^{-1}\sigma_i}{2^{n-i+2}}) \leq \exp{(\frac{N^{-1}\ln N}{2^{n-i+2}})}B^l_i(\sigma_i)$ and
$\exp{(\frac{N^{-1}\ln N}{2^{n-i+2}})} \leq C.$\\
The discrete two-point boundary value problem is now defined on any mesh $M_{\vec b}$ by the finite difference method
\begin{equation}\label{discreteBVP}
-E\delta^2\vec{U} +A(x)\vec{U}=\vec{f}(x),  \qquad \vec{U}(0)=\vec{u}(0),\;\;\vec{U}(1)=\vec{u}(1).
\end{equation}
This is used to compute numerical approximations to the exact solution of (\ref{BVP}). Note that (\ref{discreteBVP}) can also be written in the operator form
\[\vec{L}^N \vec{U}\;=\;\vec{f}, \qquad \vec{U}(0)=\vec{u}(0),\;\;\vec{U}(1)=\vec{u}(1)\]
where \[\vec{L}^N\;=\;-E\delta^2+A(x)\] and $ \delta^2,\; D^+ \; \tx{and}
\; D^{-}$ are the difference operators
\[\delta^2\vec{U}(x_j)\;=\;\dfrac{D^+\vec{U}(x_j)-D^-\vec{U}(x_j)}{\oln{h}_j}\]
\[D^+\vec{U}(x_j)\;=\;\dfrac{\vec{U}(x_{j+1})-\vec{U}(x_j)}{h_{j+1}}\;\;\;\tx{and}\;\;\;D^-\vec{U}(x_j)\;=\;\dfrac{\vec{U}(x_j)-\vec{U}(x_{j-1})}{h_j}.\]
with $\;\oln{h}_j\;=\;\dfrac{h_j+h_{j+1}}{2},\;\;\;\;h_j\;=\;x_{j}-x_{j-1}.$\\
The following discrete results are analogous to those for the continuous case.
\begin{lemma}\label{dmax} Let $A(x)$ satisfy (\ref{a1}) and (\ref{a2}).
Then, for any mesh function $\vec{\Psi}$, the inequalities $\vec
{\Psi}(0)\;\ge\;\vec 0, \; \vec {\Psi}(1)\;\ge\;\vec 0
\;\rm{and}\;\vec{L}^N \vec{\Psi}(x_j)\;\ge\;\vec 0\;$ for
$1\;\le\;j\;\le\;N-1,\;$ imply that $\;\vec \Psi(x_j)\ge \vec 0\;$
for $0\;\le\;j\;\le\;N.\;$
\end{lemma}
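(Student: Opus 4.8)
The plan is to transcribe, line for line, the contradiction argument already used for the continuous operator in Lemma \ref{max}, replacing the second derivative $\psi_{i^*}''(x^*)$ by the second-order difference operator $\delta^2\Psi_{i^*}(x_{j^*})$ and the fact ``$x^*$ is an interior minimum $\Rightarrow \psi_{i^*}''(x^*)\ge 0$'' by its discrete counterpart. Concretely, I would first choose indices $i^*$ and $j^*$ with $1\le i^*\le n$ and $0\le j^*\le N$ such that $\Psi_{i^*}(x_{j^*})=\min_{i,j}\Psi_i(x_j)$, and suppose for contradiction that $\Psi_{i^*}(x_{j^*})<0$. The boundary hypotheses $\vec\Psi(0)\ge\vec 0$ and $\vec\Psi(1)\ge\vec 0$ then force $j^*\notin\{0,N\}$, so $x_{j^*}$ is an interior mesh point and its neighbours $x_{j^*-1},x_{j^*+1}$ both exist.

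The one genuinely discrete ingredient is the inequality $\delta^2\Psi_{i^*}(x_{j^*})\ge 0$. Since $\Psi_{i^*}(x_{j^*})$ is a global minimum of the mesh function, $\Psi_{i^*}(x_{j^*+1})-\Psi_{i^*}(x_{j^*})\ge 0$ and $\Psi_{i^*}(x_{j^*})-\Psi_{i^*}(x_{j^*-1})\le 0$, hence $D^+\Psi_{i^*}(x_{j^*})\ge 0\ge D^-\Psi_{i^*}(x_{j^*})$; dividing their difference by $\oln h_{j^*}>0$ gives $\delta^2\Psi_{i^*}(x_{j^*})\ge 0$.

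I would then evaluate the $i^*$-th component of $\vec L^N\vec\Psi$ at $x_{j^*}$:
\[
(\vec L^N\vec\Psi(x_{j^*}))_{i^*}=-\eps_{i^*}\,\delta^2\Psi_{i^*}(x_{j^*})+\sum_{j=1}^n a_{i^*,j}(x_{j^*})\,\Psi_j(x_{j^*}).
\]
The first term is $\le 0$ by the previous step. For the sum, using $a_{i^*,j}(x_{j^*})\le 0$ for $j\ne i^*$ together with $\Psi_j(x_{j^*})\ge\Psi_{i^*}(x_{j^*})$ one has $a_{i^*,j}(x_{j^*})\Psi_j(x_{j^*})\le a_{i^*,j}(x_{j^*})\Psi_{i^*}(x_{j^*})$ for every $j\ne i^*$, so
\[
\sum_{j=1}^n a_{i^*,j}(x_{j^*})\,\Psi_j(x_{j^*})\le\Big(\sum_{j=1}^n a_{i^*,j}(x_{j^*})\Big)\Psi_{i^*}(x_{j^*})<0,
\]
the strict inequality following from (\ref{a2}) and $\Psi_{i^*}(x_{j^*})<0$. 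Hence $(\vec L^N\vec\Psi(x_{j^*}))_{i^*}<0$, contradicting the hypothesis $\vec L^N\vec\Psi(x_j)\ge\vec 0$ for $1\le j\le N-1$; therefore $\Psi_{i^*}(x_{j^*})\ge 0$, i.e. $\vec\Psi(x_j)\ge\vec 0$ for $0\le j\le N$.

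I do not anticipate a substantive obstacle: the proof is the routine discrete mirror of Lemma \ref{max}, and the diagonal dominance plus sign conditions (\ref{a1})–(\ref{a2}) are used in exactly the same way. The only step needing a moment's care is the sign of $\delta^2$ at the discrete minimum, which rests solely on $\oln h_{j^*}>0$ and the one-sided signs of the forward and backward differences there; as in the continuous case, the strict positivity of the row sums is what converts ``$\ge$'' into the required strict contradiction.
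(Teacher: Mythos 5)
Your proposal is correct and follows essentially the same contradiction argument as the paper: locate the global discrete minimum $(i^*,j^*)$, rule out the boundary, use the sign of $\delta^2\Psi_{i^*}(x_{j^*})$ at the minimum, and combine the off-diagonal sign condition (\ref{a1}) with the positive row sums (\ref{a2}) to force $(\vec{L}^N\vec\Psi(x_{j^*}))_{i^*}<0$. If anything, your version is slightly more careful than the paper's, which asserts $\delta^2\Psi_{i^*}(x_{j^*})>0$ strictly where only $\ge 0$ is guaranteed; your derivation correctly obtains the strict contradiction from the row-sum condition instead.
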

\begin{proof} Let $i^*, j^*$ be such that
$\Psi_{i^*}(x_{j^{*}})=\min_{i,j}\Psi_i(x_j)$ and assume that the
lemma is false. Then $\Psi_{i^*}(x_{j^{*}})<0$ . From the hypotheses
we have $j^*\neq 0, \;N$ and $\Psi_{i^*}
(x_{j^*})-\Psi_{i^*}(x_{j^*-1})\leq 0, \; \Psi_{i^*}
(x_{j^*+1})-\Psi_{i^*}(x_{j^*})\geq 0,$ so
$\;\delta^2\Psi_{i*}(x_{j*})\;>\;0.\;$ It follows that
\[\ds\left(\vec{L}^N\vec{\Psi}(x_{j*})\right)_{i*}\;=
\;-\eps_{i*}\delta^2\Psi_{i*}(x_{j*})+\ds{\sum_{k=1}^n} a_{i*,\;k}(x_{j*})\Psi_{k}(x_{j*})\;<\;0,\] which is a contradiction, as required. \end{proof}

An immediate consequence of this is the following discrete stability result.
\begin{lemma}\label{dstab} Let $A(x)$ satisfy (\ref{a1}) and (\ref{a2}).
Then, for any mesh function $\vec \Psi $,
\[\parallel\vec \Psi(x_j)\parallel\;\le\;\max\left\{||\vec \Psi(0)||, ||\vec \Psi(1)||, \frac{1}{\alpha}||
\vec{L}^N\vec \Psi||\right\}, \; 0\leq j \leq N. \]
\end{lemma}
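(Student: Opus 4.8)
The plan is to follow the proof of Lemma \ref{stab} line by line, with the continuous maximum principle of Lemma \ref{max} replaced by its discrete analogue Lemma \ref{dmax}. First I would set
\[M = \max\left\{||\vec{\Psi}(0)||,\; ||\vec{\Psi}(1)||,\; \tfrac{1}{\alpha}||\vec{L}^N\vec{\Psi}||\right\}\]
and introduce the two mesh functions $\vec{\Theta}^{\pm}(x_j) = M\vec{e} \pm \vec{\Psi}(x_j)$, where $\vec{e} = (1,\dots,1)^T$ is the unit column vector.

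Next I would check that $\vec{\Theta}^{\pm}$ satisfies the three hypotheses of Lemma \ref{dmax}. At the endpoints, $\vec{\Theta}^{\pm}(0) = M\vec{e}\pm\vec{\Psi}(0)\ge\vec{0}$ and $\vec{\Theta}^{\pm}(1) = M\vec{e}\pm\vec{\Psi}(1)\ge\vec{0}$, since $M$ dominates every component of $\vec{\Psi}(0)$ and of $\vec{\Psi}(1)$ in absolute value. For the interior inequality the key observation is that the difference operator $\delta^2$ annihilates constant mesh functions, so $\delta^2(M\vec{e}) = \vec{0}$ and hence, for $1\le j\le N-1$ and each $i$,
\[\left(\vec{L}^N\vec{\Theta}^{\pm}(x_j)\right)_i = M\sum_{k=1}^{n} a_{i,k}(x_j) \pm \left(\vec{L}^N\vec{\Psi}(x_j)\right)_i.\]
By (\ref{a2}) the row sum satisfies $\sum_{k=1}^{n} a_{i,k}(x_j) > \alpha$, so the first term on the right is at least $\alpha M \ge ||\vec{L}^N\vec{\Psi}|| \ge |(\vec{L}^N\vec{\Psi}(x_j))_i|$, which makes the whole right-hand side nonnegative. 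Thus $\vec{L}^N\vec{\Theta}^{\pm}(x_j)\ge\vec{0}$ for $1\le j\le N-1$.

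Finally, Lemma \ref{dmax} gives $\vec{\Theta}^{\pm}(x_j)\ge\vec{0}$ for all $0\le j\le N$, that is, $-M\le\Psi_i(x_j)\le M$ for every $i$ and $j$, which is precisely the asserted bound. I do not expect any genuine obstacle: the only points requiring a little care are that $\delta^2$ acts trivially on constants (so that it is the reaction term $A$, and not the diffusion term, that supplies the needed positivity) and the use of (\ref{a2}) to bound the row sums of $A$ below by $\alpha$; both are exactly the ingredients already used in the continuous case.
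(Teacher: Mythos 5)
Your proposal is correct and follows essentially the same argument as the paper: the same barrier functions $M\vec{e}\pm\vec{\Psi}$ are used together with the discrete maximum principle (Lemma \ref{dmax}), with your version merely spelling out the details (that $\delta^2$ annihilates constants and that (\ref{a2}) bounds the row sums below by $\alpha$) which the paper leaves as "not hard to verify."
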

\begin{proof} Define the two functions
\[\vec{\Theta}^{\pm}(x_j)=\max\{||\vec{\Psi}(0)||,||\vec \Psi(1)||,\frac{1}{\alpha}||\vec{L^N}\vec{\Psi}||\}\vec{e}\pm
\vec{\Psi}(x_j)\]where $\vec{e}=(1,\;\dots \;,1)$ is the unit
vector. Using the properties of $A$ it is not hard to verify that
$\vec{\Theta}^{\pm}(0)\geq \vec{0}, \; \vec{\Theta}^{\pm}(1)\geq
\vec{0}$ and $\vec{L^N}\vec{\Theta}^{\pm}(x_j)\geq \vec{0}$. It
follows from Lemma \ref{dmax} that $\vec{\Theta}^{\pm}(x_j)\geq
\vec{0}$ for all $0\leq j \leq N$.
\end{proof}
\section{The local truncation error}
From Lemma \ref{dstab}, it is seen that in order to bound the error $||\vec{U}-\vec{u}||$ it suffices to bound $\vec{L}^N(\vec{U}-\vec{u})$. But this expression
satisfies
\[
\vec{L}^N(\vec{U}-\vec{u})=\vec{L}^N(\vec{U})-\vec{L}^N(\vec{u})= \vec{f}-\vec{L}^N(\vec{u})=\vec{L}(\vec{u})-\vec{L}^N(\vec{u})\]
\[=(\vec{L}-\vec{L}^N)\vec{u}
=-E(\delta^2-D^2)\vec{u}\] which is the local truncation of the second derivative. Then
\[E(\delta^2-D^2)\vec{u}
\;=\;E(\delta^2-D^2)\vec{v}+E(\delta^2-D^2)\vec{w}
\] and so, by the triangle inequality,
\begin{equation}\label{triangleinequality}
\parallel \vec L^N(\vec{U}-\vec{u})\parallel\;\leq\;\parallel
E(\delta^2-D^2)\vec{v}\parallel+\parallel E(\delta^2-D^2)\vec{w}\parallel.
\end{equation}  Thus, the smooth and singular components
of the local truncation error can be treated separately. In view of this it is noted that, for any smooth function $\psi$, the following two distinct estimates of
the local truncation error of its second derivative hold
\begin{equation}\label{lte1}
|(\delta^2-D^2)\psi(x_j)|\le2\max_{s\;\in\;I_j}|\psi^{\prime\prime}(s)|\;\;\;\;\;\qquad\qquad\;\;
\end{equation}
and
\begin{equation}\label{lte2}
|(\delta^2-D^2)\psi(x_j)|\;\le\;\dfrac{\delta_j}{3}\max_{s\;\in\;I_j}|\psi^{\prime\prime\prime}(s)|
\end{equation}
where $I_j=[x_{j-1},x_{j+1}]$.
\section{Error estimate}
The smooth component of the local truncation error is estimated in
the following lemma.
\begin{lemma} \label{smootherror} Let $A(x)$ satisfy (\ref{a1}) and (\ref{a2}). Then, for
each $i=1,\; \dots ,\; n$ and $j=1, \;\dots,\; N-1$, we have
\[ |\eps_{i}(\delta^2-D^2)v_i(x_j)| \leq\;C\sqrt{\eps_i}N^{-1}.\]
\end{lemma}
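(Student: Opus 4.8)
The plan is to use the second of the two truncation-error estimates, (\ref{lte2}), rather than the first, since the smooth component $\vec v$ has no layers and hence a well-controlled third derivative. First I would invoke Lemma \ref{smooth} with $k=3$, which gives $|v_i'''(s)| \le C(1+\eps_i^{-1/2})$ for all $s\in[0,1]$, and in particular $\max_{s\in I_j}|v_i'''(s)| \le C(1+\eps_i^{-1/2})$, where $I_j=[x_{j-1},x_{j+1}]$.

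Applying (\ref{lte2}) to $\psi = v_i$ then yields
\[
|(\delta^2-D^2)v_i(x_j)| \;\le\; \dfrac{\delta_j}{3}\max_{s\in I_j}|v_i'''(s)| \;\le\; C\,\delta_j\,(1+\eps_i^{-1/2}).
\]
Multiplying by $\eps_i$ and using the uniform mesh-width bound $\delta_j \le CN^{-1}$ from (\ref{geom1}) gives
\[
|\eps_i(\delta^2-D^2)v_i(x_j)| \;\le\; C\eps_i N^{-1}(1+\eps_i^{-1/2}) \;=\; CN^{-1}(\eps_i+\sqrt{\eps_i}).
\]
Since $\eps_i\le 1$ (indeed $\sqrt{\eps_i}\le\sqrt{\alpha}/4$ by (\ref{a3})), we have $\eps_i\le\sqrt{\eps_i}$, so $\eps_i+\sqrt{\eps_i}\le 2\sqrt{\eps_i}$, and the claimed bound $|\eps_i(\delta^2-D^2)v_i(x_j)| \le C\sqrt{\eps_i}N^{-1}$ follows after absorbing the constant.

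This argument is short and uses no structural information about the mesh $M_{\vec b}$ beyond the crude bound (\ref{geom1}); estimate (\ref{lte1}) is not needed for the smooth component. There is essentially no obstacle here: the only point one must be careful about is to use the third-derivative estimate (\ref{lte2}) rather than (\ref{lte1}), since (\ref{lte1}) combined with the bound $|v_i''|\le C$ would only produce $|\eps_i(\delta^2-D^2)v_i(x_j)|\le C\eps_i$, which is not uniformly small in $N$. All of the real work has already been carried out in Lemma \ref{smooth}.
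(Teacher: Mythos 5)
Your proposal is correct and is essentially identical to the paper's proof: both apply the third-derivative truncation estimate (\ref{lte2}) together with Lemma \ref{smooth} and the mesh bound (\ref{geom1}), noting that $\eps_i|v_i'''|\le C(\eps_i+\sqrt{\eps_i})\le C\sqrt{\eps_i}$. The paper merely compresses into one line what you spell out explicitly.
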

\begin{proof} Using (\ref{lte2}), Lemma \ref{smooth} and (\ref{geom1})
it follows that
\[|\eps_{i}(\delta^2-D^2)v_i(x_j)|\;\leq\;C\delta_j\ds\max_{s\;\in\;I_j}|\eps_{i} v_i^{\prime\prime\prime}(s)|\;\leq\;C\sqrt{\eps_i} \delta_j\;\leq\;C\sqrt{\eps_i}N^{-1}\]
as required.\eop\end{proof} For the singular component a similar
estimate is needed, but in the proof the different types of mesh
must be distinguished. The following preliminary lemmas are
required.
\begin{lemma}\label{est1}  Let $A(x)$ satisfy (\ref{a1}) and (\ref{a2}). Then,
for each $i=1,\; \dots ,\; n$ and $j=1, \;\dots,\; N$, on each mesh
$M_{\vec{b}}$, the following estimate holds  \[
|\eps_i(\delta^2-D^2)w^l_i(x_j)|\;\leq\;
\frac{C\delta_j}{\sqrt\eps_1}.
\]
\end{lemma}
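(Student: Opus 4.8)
The plan is to prove this crude bound directly from the second local truncation error estimate (\ref{lte2}), combined with the third--derivative bound on the singular component in Lemma \ref{singular}. No case analysis on the mesh $M_{\vec b}$ is required: this is deliberately the coarse estimate that the later lemmas refine region by region, so it should hold on every $M_{\vec b}$ uniformly.

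First I would apply (\ref{lte2}) with $\psi=w^l_i$, obtaining
\[
|(\delta^2-D^2)w^l_i(x_j)|\;\le\;\frac{\delta_j}{3}\max_{s\in I_j}|w_i^{l,\prime\prime\prime}(s)|,
\]
and then multiply through by $\eps_i$ so that
\[
|\eps_i(\delta^2-D^2)w^l_i(x_j)|\;\le\;\frac{\delta_j}{3}\max_{s\in I_j}|\eps_i w_i^{l,\prime\prime\prime}(s)|.
\]
Next I would invoke the bound $|\eps_i w_i^{l,\prime\prime\prime}(x)|\le C\sum_{q=1}^{n}B^l_q(x)/\sqrt{\eps_q}$ from Lemma \ref{singular} --- which is precisely the quantity that has appeared. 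Finally, using property (i) of the layer functions, $0<B^l_q(s)\le 1$, together with the ordering $\eps_1<\dots<\eps_n$, which gives $1/\sqrt{\eps_q}\le 1/\sqrt{\eps_1}$ for every $q$, each of the $n$ summands is bounded by $1/\sqrt{\eps_1}$, so $\sum_{q=1}^{n}B^l_q(s)/\sqrt{\eps_q}\le n/\sqrt{\eps_1}$. Absorbing the factor $n$ into the generic constant $C$ yields $|\eps_i(\delta^2-D^2)w^l_i(x_j)|\le C\delta_j/\sqrt{\eps_1}$ for $1\le j\le N-1$, as required; the analogous estimate for $w^r_i$ follows the same way from the corresponding bounds for the right layer.

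There is no genuine obstacle here: every inequality used is a hypothesis, an elementary property of the $B^l_q$, or a lemma already proved. The only points to watch are that (\ref{lte2}) is used at interior mesh--points and that Lemma \ref{singular} is stated for $\eps_i w_i^{l,\prime\prime\prime}$ rather than $w_i^{l,\prime\prime\prime}$ itself, which is exactly the combination produced by multiplying by $\eps_i$. Isolating this lemma is worthwhile precisely because the estimate is uniform and needs no knowledge of which transition parameters $\sigma_i$ satisfy $b_i=0$ or $b_i=1$; the sharper $N^{-1}\ln N$--type bounds, which do depend on that information, are established in the lemmas that follow.
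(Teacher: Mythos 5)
Your proposal is correct and is essentially the paper's own proof: apply (\ref{lte2}) to $w^l_i$, multiply by $\eps_i$, invoke the third-derivative bound from Lemma \ref{singular}, and bound $\sum_{q=1}^n B^l_q(s)/\sqrt{\eps_q}$ by $C/\sqrt{\eps_1}$ using $B^l_q\le 1$ and $\eps_1<\eps_q$. The only difference is that you spell out the last absorption step (the factor $n$ into $C$), which the paper leaves implicit.
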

\begin{proof}
From \eqref{lte2} and Lemma \ref{singular}, it follows that
\[
|\eps_i(\delta^2-D^2)w^l_i(x_j)|\leq
C\delta_j\;\ds\max_{s\;\in\;I_j}|\eps_{i}
w_i^{l,\prime\prime\prime}(s)|\]
\[\leq C\delta_j\;\ds\max_{s\;\in\;I_j}\ds\sum_{q\;=\;1}^n \dfrac{B^{l}_{q}(s)}{\sqrt\eps_q}\]
\[\leq \dfrac{C\delta_j}{\sqrt\eps_1}\] as required.\eop\end{proof}

In what follows  second degree polynomials of the form
\[p_{i;\theta}(x)=\sum_{k=0}^2
\frac{(x-x_{\theta})^k}{k!}w_{i}^{l,(k)}(x_{\theta})\] are used, where $\theta$ denotes a pair of integers separated by a comma.

\begin{lemma}\label{general} Let $A(x)$ satisfy (\ref{a1}) and (\ref{a2}). Then, for each
$i=1,\; \dots ,\; n$, $j=1, \;\dots,\; N$ and $k=1,\;\;\dots,\;\;
n-1$, on each mesh $M_{\vec{b}}$ with $b_k =1$, there exists a
decomposition \[ w^l_i=\sum_{m=1}^{k+1}w_{i,m}, \] for which  the
following estimates hold for each $m$,  $1 \le m \le k$,
\[|\eps_i
w_{i,m}^{\prime\prime}(x)| \leq CB^l_m(x), \;\;|\eps_i w_{i,m}^{\prime\prime\prime}(x)| \leq C\frac{B^l_{m}(x)}{\sqrt{\eps_m}}\] and
\[|\eps_i
w_{i,k+1}^{\prime\prime\prime}(x)| \leq C\sum_{q=k+1}^{n}\frac{B^l_{q}(x)}{\sqrt{\eps_q}}.\] Furthermore \[ |\eps_i(\delta^2-D^2)w^l_i(x_j)| \leq
C(B^l_{k}(x_{j-1})+\frac{\delta_j}{\sqrt{\eps_{k+1}}}).\] Analogous results hold for the  $w^r_i$ and their derivatives.
\end{lemma}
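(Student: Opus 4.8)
The plan is to build the decomposition $w^l_i=\sum_{m=1}^{k+1}w_{i,m}$ recursively, peeling off one boundary layer at a time, exactly mirroring the inductive structure already used in the proof of Lemma~\ref{singular}. Starting from the full system for $\vec w^l$, I would first isolate the component $w^l_1$ associated with the smallest parameter $\eps_1$: set $w_{i,1}$ to be the part of $w^l_i$ carrying the $B^l_1$ layer, obtained by solving the appropriate one-equation (or reduced) problem, and let the remainder satisfy a system in which the $\eps_1$-layer has been removed. Iterating this $k$ times produces $w_{i,1},\dots,w_{i,k}$, each governed essentially by a scalar reaction-diffusion equation with reaction coefficient comparable to $\eps_m$, so that the scalar estimates of \cite{MORS} (the base case invoked in Lemma~\ref{singular}) give $|\eps_i w_{i,m}^{\prime\prime}(x)|\le CB^l_m(x)$ and $|\eps_i w_{i,m}^{\prime\prime\prime}(x)|\le CB^l_m(x)/\sqrt{\eps_m}$. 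The leftover term $w_{i,k+1}=w^l_i-\sum_{m=1}^k w_{i,m}$ then satisfies a system whose smallest parameter is $\eps_{k+1}$, so Lemma~\ref{singular} applied to \emph{that} system (with the layer functions $B^l_{k+1},\dots,B^l_n$) yields $|\eps_i w_{i,k+1}^{\prime\prime\prime}(x)|\le C\sum_{q=k+1}^n B^l_q(x)/\sqrt{\eps_q}$.

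With the decomposition and its derivative bounds in hand, the truncation-error estimate follows by treating the two groups of terms with the two different standard estimates \eqref{lte1} and \eqref{lte2}. For $m\le k$, I would apply \eqref{lte1} to $w_{i,m}$, giving $|\eps_i(\delta^2-D^2)w_{i,m}(x_j)|\le 2\max_{s\in I_j}|\eps_i w_{i,m}^{\prime\prime}(s)|\le C\max_{s\in I_j}B^l_m(s)=CB^l_m(x_{j-1})$, and then use the monotonicity property (i) of the layer functions together with $B^l_m(x_{j-1})\le B^l_k(x_{j-1})$ for $m\le k$ (since $\eps_m\le\eps_k$) to collapse the sum $\sum_{m=1}^k$ into a single $CB^l_k(x_{j-1})$. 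For the last term I would apply \eqref{lte2} to $w_{i,k+1}$: $|\eps_i(\delta^2-D^2)w_{i,k+1}(x_j)|\le\frac{\delta_j}{3}\max_{s\in I_j}|\eps_i w_{i,k+1}^{\prime\prime\prime}(s)|\le C\delta_j\sum_{q=k+1}^n B^l_q(s)/\sqrt{\eps_q}\le C\delta_j/\sqrt{\eps_{k+1}}$, using $\sqrt{\eps_{k+1}}\le\sqrt{\eps_q}$ for $q\ge k+1$ and $0<B^l_q\le1$. Summing the two contributions gives $|\eps_i(\delta^2-D^2)w^l_i(x_j)|\le C(B^l_k(x_{j-1})+\delta_j/\sqrt{\eps_{k+1}})$, as claimed.

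The main obstacle is making the recursive construction of the $w_{i,m}$ genuinely precise: one must exhibit, at each stage, a well-posed auxiliary problem whose solution carries exactly the $B^l_m$ layer and whose complement in $w^l_i$ has its smallest effective parameter bumped up to $\eps_{m+1}$, and one must verify that the off-diagonal coupling terms (which become inhomogeneities of the smooth-component type, as in the $\vec g=-a_{i,n}w^l_n$ step of Lemma~\ref{singular}) do not spoil the stated bounds. This requires combining Lemma~\ref{smooth} (to bound the smooth parts generated by these inhomogeneities) with the induction hypothesis of Lemma~\ref{singular} (to bound the singular parts), and checking that the resulting constants absorb into the generic $C$. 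The hypothesis $b_k=1$ enters here: by \eqref{geom0} it guarantees $B^l_k(\sigma_k)=N^{-1}$, which is what ultimately makes the term $B^l_k(x_{j-1})$ small on the fine part of the mesh, though that payoff is only realised in the later lemmas that use this one. The analogous statement for $w^r_i$ follows by the symmetry $x\mapsto 1-x$ together with \eqref{x2} and property (ii) of the layer functions, with no new ideas required.
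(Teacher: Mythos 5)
The second half of your argument is exactly the paper's and is sound once the decomposition is available: apply \eqref{lte1} to each $w_{i,m}$, $m\le k$, apply \eqref{lte2} to $w_{i,k+1}$, collapse $\sum_{m=1}^{k}B^l_m(x_{j-1})\le CB^l_k(x_{j-1})$ using property (i) of the layer functions, and bound $\sum_{q=k+1}^{n}B^l_q/\sqrt{\eps_q}\le C/\sqrt{\eps_{k+1}}$. The genuine gap is the first half: the decomposition and the derivative bounds, which you yourself defer as ``the main obstacle''. In the paper the decomposition is not obtained by peeling off layers through auxiliary differential systems; it is an elementary pointwise splitting built on the crossing points of Lemma \ref{xs}: $w_{i,k+1}$ equals $w^l_i$ on $[x_{k,k+1},1]$ and the Taylor polynomial $p_{i;k,k+1}$ of $w^l_i$ about $x_{k,k+1}$ on $[0,x_{k,k+1})$; each $w_{i,m}$, $2\le m\le k$, likewise switches to $p_{i;m-1,m}$ below $x_{m-1,m}$, and $w_{i,1}$ is the remainder, so that $w_{i,m}\equiv 0$ on $[x_{m,m+1},1]$. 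The third-derivative bounds then come from Lemma \ref{singular} combined with \eqref{x3}: for $x\le x_{m-1,m}$ the sum $\sum_q B^l_q(x_{m-1,m})/\sqrt{\eps_q}$ is dominated by $CB^l_m(x_{m-1,m})/\sqrt{\eps_m}\le CB^l_m(x)/\sqrt{\eps_m}$; and the second-derivative bound follows by writing $\eps_i w_{i,m}''(x)=-\int_x^{x_{m,m+1}}\eps_i w_{i,m}'''(s)\,ds$ (using $w_{i,m}\equiv 0$ beyond $x_{m,m+1}$) and integrating the layer function. Your proposal never invokes the points $x_{i,j}$ or Lemma \ref{xs}, which are the crux of this construction.

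Two concrete reasons your sketched alternative does not go through as stated. First, the hypothesis $b_k=1$ is misplaced: in this lemma it is used at the very first step to conclude $\sqrt{\eps_k}\le\sqrt{\eps_{k+1}}/2$, which is precisely the hypothesis of Lemma \ref{xs} guaranteeing that $x_{k,k+1}$ exists and lies in $(0,\tfrac12)$, so the splitting can be performed; it is not merely the fact $B^l_k(\sigma_k)=N^{-1}$ of \eqref{geom0} whose ``payoff is only realised in the later lemmas''. A construction that never uses $b_k=1$ should make you suspicious. Second, the estimate $|\eps_i w_{i,m}''(x)|\le CB^l_m(x)$ carries the factor $\eps_i$ for every $i$, not $\eps_m$, and each component of $\vec w^l$ contains all the layers $B^l_i,\dots,B^l_n$ simultaneously (Lemma \ref{singular}); so there is no evident well-posed subsystem whose solution ``carries exactly the $B^l_m$ layer'', and the bound is not a scalar estimate from \cite{MORS} for an equation with parameter $\eps_m$. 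The paper obtains it only because $w_{i,m}$ vanishes identically on $[x_{m,m+1},1]$, allowing the second derivative to be recovered by integrating the already-established third-derivative bound. Without a precise construction delivering these two features, the derivative estimates, and hence the truncation-error bound, are unsupported.
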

\begin{proof} Since $b_k =1$ it follows that $\sqrt{\eps_{k}}\leq \sqrt{\eps_{k+1}}/2$,  so
$x_{k,k+1} \in (0,\frac{1}{2})$ and the decomposition
\[w^l_i=\sum_{m=1}^{k+1}w_{i,m},\] exists, where the components of the
decomposition are defined by
\[w_{i,k+1}=\left\{ \begin{array}{ll} p_{i;k,k+1} & {\rm on}\;\;[0,x_{k,k+1})\\
 w^l_i & {\rm otherwise} \end{array}\right. \]
and for each $m$,  $k \ge m \ge 2$,
\[w_{i,m}=\left\{ \begin{array}{ll} p_{i;m-1,m} & \rm{on} \;\; [0,x_{m-1,m})\\
w^l_i-\sum_{q=m+1}^{k+1} w_{i,q} & {\rm otherwise}
\end{array}\right. \]
and
\[w_{i,1}=w^l_i-\sum_{q=2}^{k+1} w_{i,q}\;\; \rm{on} \;\; [0,1]. \]
From the above definitions it follows that, for each $m$, $1 \leq m \leq k$,
$w_{i,m}=0 \;\; \rm{on} \;\; [x_{m,m+1},1]$.\\
To establish the bounds on the third derivatives it is seen that:

for $x \in [x_{k,k+1},1]$, Lemma \ref{singular} and $x \geq
x_{k,k+1}$ imply that
\[|\eps_i w_{i,k+1}^{\prime\prime \prime}(x)| =|\eps_i w_{i}^{l,\prime\prime \prime}(x)| \leq
C\sum_{q=1}^n \frac{B^l_q(x)}{\sqrt{\eps_q}} \leq C\sum_{q=k+1}^n
\frac{B^l_q(x)}{\sqrt{\eps_q}};\]

for $x \in [0, x_{k,k+1}]$, Lemma \ref{singular} and $x \leq
x_{k,k+1}$ imply that
\[|\eps_i w_{i,k+1}^{\prime\prime \prime}(x)| =|\eps_i w_{i}^{l, \prime\prime
\prime}(x_{k,k+1})| \leq \sum_{q=1}^{n}
\frac{B^l_q(x_{k,k+1})}{\sqrt{\eps_q}} \leq \sum_{q=k+1}^{n}
\frac{B^l_q(x_{k,k+1})}{\sqrt{\eps_q}} \leq \sum_{q=k+1}^{n}
\frac{B^l_q(x)}{\sqrt{\eps_q}};\]

and for each $m=k, \;\; \dots \;\;,2$, it follows that\\

for $x \in [x_{m,m+1},1]$, $w_{i,m}^{\prime\prime \prime}=0;$

for $x \in [x_{m-1,m},x_{m,m+1}]$, Lemma \ref{singular} implies that
\[|\eps_i w_{i,m}^{\prime\prime \prime}(x)| \leq |\eps_i w_{i}^{l,\prime\prime
\prime}(x)|+\sum_{q=m+1}^{k+1}|\eps_i w_{i,q}^{\prime\prime
\prime}(x)| \leq C\sum_{q=1}^n \frac{B^l_q(x)}{\sqrt{\eps_q}} \leq
C\frac{B^l_m(x)}{\sqrt{\eps_m}};\]

for $x \in [0, x_{m-1,m}]$, Lemma \ref{singular} and $x \leq x_{m-1,m}$ imply that
\[|\eps_i w_{i,m}^{\prime\prime \prime}(x)| =|\eps_i w_{i}^{l,\prime\prime
\prime}(x_{m-1,m})| \leq C\sum_{q=1}^n \frac{B^l_q(x_{m-1,m})}{\sqrt{\eps_q}} \leq C\frac{B^l_m(x_{m-1,m})}{\sqrt{\eps_m}} \leq C\frac{B^l_m(x)}{\sqrt{\eps_m}};
\]

for $x \in [x_{1,2},1],\;\; w_{i,1}^{\prime\prime \prime}=0;$

for $x \in [0, x_{1,2}]$, Lemma \ref{singular} implies that \[|\eps_i w_{i,1}^{\prime\prime \prime}(x)| \leq |\eps_i w_{i}^{l,\prime
\prime\prime}(x)|+\sum_{q=2}^{k+1}|\eps_i w_{i,q}^{\prime\prime \prime}(x)|\leq C\sum_{q=1}^n \frac{B^l_q(x)}{\sqrt{\eps_q}} \leq C\frac{B^l_1(x)}{\sqrt{\eps_1}}.\]

For the bounds on the second derivatives note that, for each $m$, $1 \leq m \leq k $ :

for $x \in [x_{m,m+1},1],\;\; w_{i,m}^{\prime\prime}=0;$

for $x \in [0, x_{m,m+1}],\;\; \int_x^{x_{m,m+1}}\eps_i w_{i,m}^{\prime\prime \prime}(s)ds= \eps_i w_{i,m}^{\prime\prime}(x_{m,m+1})- \eps_i
w_{i,m}^{\prime\prime}(x)= -\eps_i w_{i,m}^{\prime\prime}(x)$ \\
and so
\[|\eps_i w_{i,m}^{\prime\prime}(x)| \leq \int_x^{x_{m,m+1}}|\eps_i
w_{i,m}^{\prime\prime \prime}(s)|ds \leq \frac{C}{\sqrt{\eps_m}}\int_{x}^{x_{m,m+1}} B^l_m(s)ds \leq CB^l_m(x).\] Finally, since
\[|\eps_i(\delta^{2}-D^2)w^l_i(x_j)| \leq \sum_{m=1}^{k}|\eps_i(\delta^{2}-D^2)w_{i,m}(x_j)|+ |\eps_i(\delta^{2}-D^2)w_{i,k+1}(x_j)|,\] using  (\ref{lte2}) on the
last term and (\ref{lte1}) on all other terms on the right hand side, it follows that
\[|\eps_i(\delta^{2}-D^2)w^l_i(x_j)| \leq C(\sum_{m=1}^{k}\max_{s \in
I_j}|\eps_iw_{i,m}^{\prime\prime}(s)| +\delta_j\max_{s \in
I_j}|\eps_iw_{i,k+1}^{\prime\prime \prime}(s)|).\]  The desired
result follows by applying the bounds on the derivatives in the
first part of this lemma.
\\The proof for the $w^r_i$ and their derivatives is similar. \end{proof}

%
\begin{lemma}\label{est3} Let $A(x)$ satisfy (\ref{a1}) and (\ref{a2}).
Then, for each $i=1,\; \dots ,\; n$ and $j=1, \;\dots,\; N$, on each mesh $M_{\vec{b}}$ the following estimate holds \[ |\eps_i(\delta^2-D^2)w^l_i(x_j)| \leq
CB^l_n(x_{j-1}).\] An analogous result holds for $w^r$.
\end{lemma}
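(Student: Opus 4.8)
The plan is to use the coarser of the two truncation-error estimates, namely (\ref{lte1}), together with the second-derivative bound of Lemma \ref{singular}. No property of the particular mesh $M_{\vec b}$ enters the argument, which is precisely why the estimate holds on every mesh in the class (in contrast to Lemma \ref{general}).

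First I would note that, since $\delta^2$ and $D^2$ are linear and $\eps_i$ is a constant, $\eps_i(\delta^2-D^2)w^l_i(x_j)=(\delta^2-D^2)(\eps_i w^l_i)(x_j)$, so applying (\ref{lte1}) to the smooth scalar function $\eps_i w^l_i$ gives
\[|\eps_i(\delta^2-D^2)w^l_i(x_j)|\;\le\;2\max_{s\in I_j}|\eps_i w_i^{l,\prime\prime}(s)|.\]
Then, by the bound on $w_i^{l,\prime\prime}$ in Lemma \ref{singular},
\[|\eps_i w_i^{l,\prime\prime}(s)|\;\le\;C\eps_i\sum_{q=i}^{n}\frac{B^l_q(s)}{\eps_q}\;=\;C\sum_{q=i}^{n}\frac{\eps_i}{\eps_q}\,B^l_q(s)\;\le\;CB^l_n(s),\]
where the last inequality uses that $\eps_i/\eps_q\le 1$ for $q\ge i$ (by the ordering of the $\eps_q$), that $B^l_q(s)\le B^l_n(s)$ by property (i) of the layer functions, and that the sum has at most $n$ terms. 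Finally, since $B^l_n$ is decreasing, its maximum over $I_j=[x_{j-1},x_{j+1}]$ is attained at $x_{j-1}$; combining the two displays yields $|\eps_i(\delta^2-D^2)w^l_i(x_j)|\le CB^l_n(x_{j-1})$. For $w^r_i$ the argument is identical, using the analogous bound $|w_i^{r,\prime\prime}(x)|\le C\sum_{q=i}^{n}B^r_q(x)/\eps_q$ and property (ii); since $B^r_n$ is increasing, its maximum over $I_j$ is attained at $x_{j+1}$, so one obtains $|\eps_i(\delta^2-D^2)w^r_i(x_j)|\le CB^r_n(x_{j+1})$.

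I do not expect a genuine obstacle here; the one point worth flagging is that one should resist imitating the proof of Lemma \ref{est1}, which uses (\ref{lte2}) and the third-derivative bound. That route introduces a factor $1/\sqrt{\eps_1}$, because the third-derivative bound of Lemma \ref{singular} is a sum $\sum_{q=1}^{n}B^l_q/\sqrt{\eps_q}$ starting at $q=1$, so the weights $\eps_i/\sqrt{\eps_q}$ are not controlled. The second-derivative bound instead starts at $q=i$ and carries the weights $\eps_i/\eps_q\le 1$, which is exactly what makes (\ref{lte1}) the appropriate tool for this lemma.
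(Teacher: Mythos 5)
Your proposal is correct and follows essentially the same route as the paper: the bound (\ref{lte1}) combined with the second-derivative estimate of Lemma \ref{singular}, the monotonicity of $B^l_n$ (maximum at $x_{j-1}$), and the facts $\eps_i/\eps_q\le 1$ and $B^l_q\le B^l_n$ for $q\ge i$. Your closing remark about why (\ref{lte2}) is the wrong tool here matches the role the paper assigns to Lemma \ref{est1} versus this lemma.
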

\begin{proof}
From $\;\eqref{lte1}\;$ and Lemma \ref{singular}, for each
$\;i=1,\dots,n\;$ and $\;j=1,\dots,N,\;$ it follows that
\[|\eps_i(\delta^2-D^2)w^l_i(x_j)|\;\leq\;
C\;\ds\max_{s\in I_j}|\eps_i w_i^{l,\prime\prime}(s)|\;\]\[\le\; C\;\eps_i\ds\sum_{p=i}^{n}\dfrac{B^l_p(x_{j-1})}{\eps_p}\;\le\;CB^l_n(x_{j-1}).\eop\]
\end{proof}

Using the above preliminary lemmas on appropriate subintervals, the desired estimate of the singular components of the local truncation error are proved in the
following lemma.
\begin{lemma}\label{singularerror} Let $A(x)$ satisfy (\ref{a1}) and (\ref{a2}). Then,
for each $i=1,\; \dots ,\; n$ and $j=1, \;\dots,\; N$, the following
estimate holds
\[ |\eps_{i}(\delta^2-D^2)w_i(x_j)| \leq CN^{-1}\ln N. \]
\end{lemma}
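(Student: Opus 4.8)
The plan is to prove the bound $|\eps_i(\delta^2-D^2)w_i(x_j)|\le CN^{-1}\ln N$ by splitting into the left and right layer parts, $w_i=w_i^l+w_i^r$, treating $w_i^l$ in detail (the argument for $w_i^r$ being symmetric), and then performing a case analysis on the location of the mesh point $x_j$ relative to the transition parameters $\sigma_1,\dots,\sigma_n$, using whichever of the preliminary estimates (Lemma~\ref{est1}, Lemma~\ref{general}, Lemma~\ref{est3}) is sharpest on each piece. The two extreme regimes are easy: on the outermost uniform mesh $(\sigma_n,1-\sigma_n]$, where $\delta_j\le CN^{-1}$ is not by itself enough, one instead uses Lemma~\ref{est3}, since for $x_{j-1}\ge\sigma_n$ one has $B^l_n(x_{j-1})\le B^l_n(\sigma_n)$, and if $b_n=1$ then $B^l_n(\sigma_n)=N^{-1}$ by \eqref{geom0}, while if $b_n=0$ then $\sigma_n=\tfrac14$ and $B^l_n(\sigma_n)=e^{-\sqrt{\alpha/\eps_n}/4}$ is exponentially small (smaller than any power of $N$, using $\sigma_n\ne\sqrt{\eps_n/\alpha}\ln N$ means $\sqrt{\eps_n/\alpha}\ln N>\tfrac14$), so in either case the bound is $\le CN^{-1}$.

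The main work is the analysis on the layer region $[0,\sigma_n]$. Here I would localize $x_j$ to the mesh subinterval containing it: suppose $x_j\in(\sigma_{k-1},\sigma_k]$ for some $k$ (with the convention $\sigma_0=0$, and the subinterval $[0,\sigma_1]$ handled as the case $k=1$). On such a subinterval $\delta_j\le C N^{-1}\sigma_k 2^{n-k+2}$-type bounds hold, but more usefully one knows $\delta_j\le C\sqrt{\eps_k}\,N^{-1}\ln N$-style estimates once $\sigma_k\le C\sqrt{\eps_k}\ln N$ by \eqref{geom2}. I would then branch on whether $b_k=1$ or $b_k=0$. If $b_k=1$, apply Lemma~\ref{general} with that value of $k$ (legitimate since $b_k=1$), giving
\[
|\eps_i(\delta^2-D^2)w^l_i(x_j)|\le C\Bigl(B^l_k(x_{j-1})+\frac{\delta_j}{\sqrt{\eps_{k+1}}}\Bigr);
\]
for the first term, $x_{j-1}\ge\sigma_{k-1}$ forces $B^l_k(x_{j-1})$ to be controlled — more carefully, one uses that on $(\sigma_{k-1},\sigma_k]$ the relevant bound is $B^l_k(\sigma_k)=N^{-1}$ (when $b_k=1$) together with \eqref{geom3} to absorb the shift from $x_j$ to $x_{j-1}$, yielding $B^l_k(x_{j-1})\le CB^l_k(\sigma_k)\le CN^{-1}$; and $B^l_k(x_{j-1})\le 1$ handles $x_{j-1}$ in earlier subintervals after a further descent argument. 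For the second term, $\delta_j\le C\sigma_{k+1}2^{n-k+1}N^{-1}\le C\sqrt{\eps_{k+1}}\,N^{-1}\ln N$ on that subinterval (using \eqref{geom2}), so $\delta_j/\sqrt{\eps_{k+1}}\le CN^{-1}\ln N$. If instead $b_k=0$, then $\sigma_k=\sigma_{k+1}/2$ and one descends: find the smallest index $\ell>k$ with $b_\ell=1$ (or $\ell=n$ if none), use \eqref{geom-1} to write $\sigma_k=2^{-(\ell-k)}\sigma_\ell$, and apply Lemma~\ref{general} with that $\ell$, or Lemma~\ref{est1}/Lemma~\ref{est3} if no such $\ell$ exists; the geometric decrease of the $\sigma$'s makes the mesh step on $(\sigma_{k-1},\sigma_k]$ small enough that $\delta_j/\sqrt{\eps_1}$ or the relevant ratio is $\le CN^{-1}\ln N$.

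The hard part, and where care is needed, is the bookkeeping in the $b_k=0$ case: one must correctly identify which coarser Shishkin parameter $\sigma_\ell$ to compare against, confirm that the corresponding layer function value $B^l_\ell$ at the relevant mesh point is $\le N^{-1}$ via \eqref{geom0} and \eqref{geom3}, and verify that $\delta_j$ on the actual subinterval is bounded by $C\sqrt{\eps_\ell}\,N^{-1}\ln N$ (or by $C N^{-1}\ln N$ after dividing by $\sqrt{\eps_\ell}$) — this requires chaining the mesh-width formula $x_j-x_{j-1}=(\sigma_k-\sigma_{k-1})/(N/2^{n-k+2})$ with the geometric relation \eqref{geom-1} and the estimate \eqref{geom2}. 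Once each of the finitely many cases ($x_j$ in the outer uniform region; $x_j\in(\sigma_{k-1},\sigma_k]$ with $b_k=1$; with $b_k=0$) yields a bound $\le CN^{-1}\ln N$, take the maximum of the finitely many constants, add the symmetric contribution from $w^r_i$, and invoke the triangle inequality $|\eps_i(\delta^2-D^2)w_i(x_j)|\le|\eps_i(\delta^2-D^2)w^l_i(x_j)|+|\eps_i(\delta^2-D^2)w^r_i(x_j)|$ to conclude.
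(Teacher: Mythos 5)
Your overall plan coincides with the paper's (split $\vec w=\vec w^l+\vec w^r$, march outward through the subintervals, and on each piece use Lemma~\ref{est1}, Lemma~\ref{general} or Lemma~\ref{est3} together with \eqref{geom-1}--\eqref{geom3}), but two of your concrete inequality/index choices are wrong, and with them the bound does not follow. First, in the outer region with $b_n=0$ your claim that $B^l_n(\sigma_n)=e^{-\frac14\sqrt{\alpha/\eps_n}}$ is smaller than any power of $N$ is inverted: $b_n=0$ means $\sqrt{\eps_n/\alpha}\,\ln N\ge\tfrac14$, i.e.\ $\sqrt{\alpha/\eps_n}\le 4\ln N$, so $B^l_n(\tfrac14)\ge N^{-1}$, and since \eqref{a3} only guarantees $\sqrt{\alpha/\eps_n}\ge 4$ it can be as large as $e^{-1}$, a constant. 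Hence Lemma~\ref{est3} alone is useless on $(\sigma_n,1-\sigma_n]$ when $b_n=0$; the paper instead splits this region into the subclasses $b_1=\dots=b_n=0$ (Lemma~\ref{est1}, with \eqref{geom-1} and \eqref{geom2} giving $\delta_j/\sqrt{\eps_1}\le CN^{-1}\ln N$) and $b_r=1$, $b_{r+1}=\dots=b_n=0$ (Lemma~\ref{general} with index $r$), reserving Lemma~\ref{est3} for $b_n=1$.

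Second, and more seriously, your index bookkeeping in the layer region is off by one in the wrong direction. For $x_j\in(\sigma_{k-1},\sigma_k]$ with $b_k=1$ you invoke Lemma~\ref{general} with index $k$ and claim $B^l_k(x_{j-1})\le CB^l_k(\sigma_k)\le CN^{-1}$ via \eqref{geom3}; but \eqref{geom3} only absorbs a single mesh-width shift at the transition point $x_j=\sigma_k$, whereas for $x_j$ in the interior of the subinterval $x_{j-1}$ can be as small as $\sigma_{k-1}\le\sigma_k/2$, where $B^l_k(x_{j-1})\ge B^l_k(\sigma_k/2)=N^{-1/2}$, and if $\sigma_{k-1}=\sqrt{\eps_{k-1}/\alpha}\ln N\ll\sigma_k$ it can even be $O(1)$. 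The same defect is worse in your $b_k=0$ branch, where you propose Lemma~\ref{general} with a larger index $\ell>k$: then $x_{j-1}<\sigma_k\le\sigma_\ell/2$, so $B^l_\ell(x_{j-1})\ge N^{-1/2}$ again; and your fallback to Lemma~\ref{est1} with $\delta_j/\sqrt{\eps_1}$ only works when \emph{all} lower $b$'s vanish, since otherwise $\sigma_k$ need not be bounded by $C\sqrt{\eps_1}\ln N$. The paper's selection goes the other way: for $x_{j-1}\ge\sigma_m$ one takes the \emph{largest} $r\le m$ with $b_r=1$ and applies Lemma~\ref{general} with that $r$, so that $B^l_r(x_{j-1})\le B^l_r(\sigma_r)=N^{-1}$ by \eqref{geom0}, controls $\delta_j/\sqrt{\eps_{r+1}}$ through the chain $\sigma_{r+1}=2^{-(m-r)}\sigma_{m+1}$ of \eqref{geom-1} together with \eqref{geom2}, uses Lemma~\ref{est1} when no such $r$ exists, and needs \eqref{geom3} only at the points $x_j=\sigma_m$. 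Replacing your ``right-endpoint/ascending'' choice of index by this ``largest lower index with $b_r=1$'' choice is what is needed to make the argument close.
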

\begin{proof} Since $\vec{w}=\vec{w}^l+\vec{w}^r$, it suffices to prove the result for $\vec{w}^l$ and $\vec{w}^r$ separately. Here it is proved for $\vec{w}^l$. A
similar proof holds for $\vec{w}^r$.\\
Stepping out from the origin each subinterval is treated separately.\\
First, consider $x \in (0,\sigma_1)$. Then, on each mesh $M_{\vec{b}}$,  $\delta_j \leq CN^{-1}\sigma_1$ and the result follows
 from (\ref{geom2}) and Lemma \ref{est1}.\\
Secondly, consider $x \in (\sigma_1,\sigma_2)$, then $\sigma_1 \leq x_{j-1}$ and $\delta_j \leq CN^{-1}\sigma_2$. The $2^{n+1}$ possible meshes are divided into 2
subclasses. On the meshes $M_{\vec b}$ with $b_1=0$ the result follows from (\ref{geom2}), (\ref{geom-1}) and Lemma \ref{est1}. On the meshes $M_{\vec b}$ with
$b_1=1$ the result follows from (\ref{geom2}), (\ref{geom0}) and Lemma \ref{general}. When $x=\sigma_1$, similar arguments apply for the 2 subclasses, except that
(\ref{geom3})is also needed for the second subclass.\\
Thirdly, in the general case $x \in (\sigma_m,\sigma_{m+1})$ for $2 \leq m \leq n-1$, it follows that $\sigma_m \leq x_{j-1}$ and $\delta_j \leq
CN^{-1}\sigma_{m+1}$. Then $M_{\vec b}$ is divided into 3 subclasses: $M_{\vec b}^0=\{M_{\vec b}: b_1= \dots =b_m =0\},\; M_{\vec b}^{r}=\{M_{\vec b}:  b_r=1, \;
b_{r+1}= \dots  =b_m =0 \; \mathrm{for \; some}\; 1 \leq r \leq m-1\}$ and $M_{\vec b}^m=\{M_{\vec b}: b_m=1\}.$ On $M_{\vec b}^0$ the result follows from
(\ref{geom2}), (\ref{geom-1}) and Lemma \ref{est1}; on $M_{\vec b}^r$ from (\ref{geom2}), (\ref{geom-1}), (\ref{geom0}) and Lemma \ref{general}; on $M_{\vec b}^m$
from (\ref{geom2}), (\ref{geom0}) and Lemma \ref{general}. When $x=\sigma_m$, similar arguments apply for the 3 subclasses, except that
(\ref{geom3})is also needed for the third subclass.\\
Finally, for $x \in (\sigma_n,1)$, $\sigma_n \leq x_{j-1}$ and $\delta_j \leq CN^{-1}$. Then $M_{\vec b}$ is divided into 3 subclasses: $M_{\vec b}^0=\{M_{\vec b}:
b_1= \dots  =b_n =0\},\; M_{\vec b}^{r}=\{M_{\vec b}:  b_r=1, \; b_{r+1}= \dots  =b_n =0 \; \mathrm{for \; some}\; 1 \leq r \leq n-1\}$ and $M_{\vec b}^n=\{M_{\vec
b}: b_n=1\}.$ On $M_{\vec b}^0$ the result follows from (\ref{geom2}), (\ref{geom-1}) and Lemma \ref{est1}; on $M_{\vec b}^r$ from (\ref{geom2}), (\ref{geom-1}),
(\ref{geom0}) and Lemma \ref{general}; on $M_{\vec b}^n$ from (\ref{geom0}) and Lemma \ref{est3}. When $x=\sigma_n$, similar arguments apply for the 3 subclasses,
except that (\ref{geom3})is also needed for the third subclass.\eop\end{proof}
 Let $\vec u$ denote the exact solution from
(\ref{BVP}) and $\vec U$ the discrete solution from (\ref{discreteBVP}). Then, the main result of this paper is the following parameter uniform error estimate
\begin{theorem} Let $A(x)$ satisfy (\ref{a1}) and (\ref{a2}). Then
 there exists a constant $C$ such that \[\parallel\vec{U}-\vec{u}\parallel \leq CN^{-1}\ln
 N,
\] for all $N > 1.$
\end{theorem}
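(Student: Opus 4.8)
The plan is to combine the discrete stability estimate of Lemma~\ref{dstab} with the local truncation error bounds already established in Section~4 and in Lemmas~\ref{smootherror} and \ref{singularerror}. First I would apply Lemma~\ref{dstab} to the mesh function $\vec{\Psi}=\vec{U}-\vec{u}$. Because the discrete problem (\ref{discreteBVP}) imposes $\vec{U}(0)=\vec{u}(0)$ and $\vec{U}(1)=\vec{u}(1)$, the boundary contributions $\|\vec{\Psi}(0)\|$ and $\|\vec{\Psi}(1)\|$ vanish, and so the error is bounded by $\|\vec{U}-\vec{u}\|\le\frac{1}{\alpha}\|\vec{L}^N(\vec{U}-\vec{u})\|$. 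This reduces the theorem to estimating $\vec{L}^N(\vec{U}-\vec{u})$.

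Next I would use the identity derived at the start of Section~4, namely $\vec{L}^N(\vec{U}-\vec{u})=(\vec{L}-\vec{L}^N)\vec{u}=-E(\delta^2-D^2)\vec{u}$, which identifies the right-hand side with the local truncation error of the second derivative. Splitting $\vec{u}=\vec{v}+\vec{w}$ into the smooth and singular components of the Shishkin decomposition and using the triangle inequality (\ref{triangleinequality}), it suffices to bound $\|E(\delta^2-D^2)\vec{v}\|$ and $\|E(\delta^2-D^2)\vec{w}\|$ separately.

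For the smooth component, Lemma~\ref{smootherror} gives $|\eps_i(\delta^2-D^2)v_i(x_j)|\le C\sqrt{\eps_i}\,N^{-1}\le CN^{-1}$ for each $i$ and each interior mesh point, whence $\|E(\delta^2-D^2)\vec{v}\|\le CN^{-1}$. For the singular component, Lemma~\ref{singularerror} gives $|\eps_i(\delta^2-D^2)w_i(x_j)|\le CN^{-1}\ln N$, whence $\|E(\delta^2-D^2)\vec{w}\|\le CN^{-1}\ln N$. Adding these two bounds and absorbing the $CN^{-1}$ term into $CN^{-1}\ln N$ (legitimate since $N>1$) yields $\|\vec{L}^N(\vec{U}-\vec{u})\|\le CN^{-1}\ln N$, and Lemma~\ref{dstab} then gives $\|\vec{U}-\vec{u}\|\le\frac{1}{\alpha}CN^{-1}\ln N=CN^{-1}\ln N$, the generic constant $C$ absorbing the factor $1/\alpha$.

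I do not expect any genuine obstacle at this final stage: all the substantive work has already been carried out in the preliminary lemmas, in particular the delicate case analysis of Lemma~\ref{singularerror} over the $2^n$ mesh types $M_{\vec b}$ together with the auxiliary decomposition of Lemma~\ref{general}. The proof of the theorem itself is therefore a routine assembly, the only points requiring care being the bookkeeping of the generic constant and the observation that $\sqrt{\eps_i}\le 1$ so that the smooth part is indeed $O(N^{-1})$ and is dominated by the logarithmic singular-component term.
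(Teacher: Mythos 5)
Your proposal is correct and follows essentially the same route as the paper: reduce the error to the local truncation error via Lemma~\ref{dstab} and the identity $\vec{L}^N(\vec{U}-\vec{u})=-E(\delta^2-D^2)\vec{u}$, split via (\ref{triangleinequality}), and invoke Lemmas~\ref{smootherror} and \ref{singularerror}. The paper states this assembly in one line; your version merely spells out the same steps (vanishing boundary terms, $\sqrt{\eps_i}\le 1$, absorption of constants) in detail.
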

\begin{proof}This follows immediately by applying Lemmas
\ref{smootherror} and \ref{singularerror} to (\ref{triangleinequality}) and using Lemma \ref{dstab}.\eop  \end{proof}
\section{An essentially second order method}
In this section it is shown that a simple modification to the Shishkin mesh constructed above leads to an essentially second order parameter-uniform numerical
method for (\ref{BVP}). The finite difference operator is the same as for the first order method; the Shishkin piecewise uniform mesh is modified by choosing a
different set of transition parameters. Instead of (\ref{sigma1}) and (\ref{sigma2}) the following parameters are used
\begin{equation}\label{tau1}\tau_{n}=\min\displaystyle\left\{\frac{1}{4},2\sqrt{\frac{\eps_n}{\alpha}}\ln N\right\}\end{equation} and for $\;i=1,\;\dots \; ,n-1$
\begin{equation}\label{tau2}\tau_{i}=\min\displaystyle\left\{\frac{\tau_{i+1}}{2},2\sqrt{\frac{\eps_i}{\alpha}}\ln N\right\}.\end{equation}
The proof that the resulting numerical method is essentially second order parameter-uniform is similar to the above and is based on an extension of the techniques
employed in \cite{MORSS}. It is assumed henceforth that the problem data satisfy additional smoothness conditions, as required.

It is first noted that, with the definitions (\ref{tau1}), (\ref{tau2}), (\ref{geom0}) is replaced by
\begin{equation}\label{geom20}
B^l_i(\tau_i)=B^r_i(1-\tau_i)=N^{-2} \;\mathrm{when}\; b_i=1.
\end{equation}
Also, for any smooth function $\psi$, in addition to (\ref{lte1}) and (\ref{lte2}), the following estimate of the local truncation error holds at any mesh point
$x_j$ of a locally uniform mesh
\begin{equation}\label{lte3}
|(\delta^2-D^2)\psi(x_j)|\;\le\;\dfrac{\delta_j^2}{12}\max_{s\;\in\;I_j}|\psi^{\prime\prime\prime\prime}(s)| \;\; \mathrm{if}\;\; x_j-x_{j-1}=x_{j+1}-x_{j}.
\end{equation}
From their construction,  it is clear that the above Shishkin meshes
are locally uniform everywhere, except at the points $x_j=\tau_k$
where $k \in I_{\vec{b}}$ and $I_{\vec{b}} = \{k: b_k=1\}.$\\ To
estimate the smooth component of the error, note that the estimate
in Lemma \ref{smootherror} can be modified to
\begin{eqnarray}\label{2ndordersmootherror}|L^N(V-v)_i(x_j)| 
\leq \; \left\{\begin{array}{l} C\sqrt{\eps_i}N^{-1}\;\;
\mathrm{if} \;\; x_j\in \{\tau_k,1-\tau_k\}, k \in I_{\vec{b}} \\
CN^{-2}\;\; \mathrm{otherwise}.\end{array}\right. \end{eqnarray} Now
introduce the mesh function $\vec{\Phi}$ where, for each $1 \leq i
\leq n$, \[\Phi_i(x_j) = CN^{-2}( \theta(x_j) +1),\] where
$\theta=\sum_{k \in I_{\vec{b}}} \theta_k $ and, for $k \in
I_{\vec{b}}$, $\theta_k$ is the
piecewise constant polynomial \begin{equation*} \theta_k(x)=  \left\{ \begin{array}{l}\;\; 0, \;\; x \in [0,\tau_k)\\\;\; 1, \;\; x \in [\tau_k, 1-\tau_k] \\
\;\; 0, \;\;  x \in (1-\tau_k,1] \end{array}\right
.\end{equation*}.\\Then \[0 \leq \Phi_i(x_j) \leq CN^{-2},\;\; 1
\leq i \leq n\] and
\begin{equation*}(L^N\Phi(x_j))_i =
CN^{-2}[-\eps_i\delta^2
\theta(x_j)+\sum_{j=1}^{n}a_{i,j}(x_j)(\theta(x_j)+1)].\end{equation*}
It follows that
\begin{equation}\label{lowerbound} (L^N\Phi(x_j))_i \ge \; \left\{\begin{array}{l} C^{\prime}(\eps_i +
N^{-2})\;\;
\mathrm{if} \;\; x_j\in \{\tau_k,1-\tau_k\}, k \in I_{\vec{b}} \\
C^{\prime}N^{-2}\;\; \mathrm{otherwise}.\end{array}\right.
\end{equation}
Considering the cases $\eps_i \ge N^{-1}$ and $\eps_i < N^{-1}$ separately,  choosing $C^{\prime}$ sufficiently small, comparing (\ref{lowerbound}) with
(\ref{2ndordersmootherror}) \;and applying the discrete maximum principle to the barrier functions
\[\vec{\Psi}^{\pm}=\vec{\Phi} \pm \vec{(V-v)}
\] gives the following estimate \begin{equation}\label{2ndordersmooth} ||\vec{(V-v)}|| \leq CN^{-2}. \end{equation}

To estimate the singular component of the error, note that the estimates in Lemmas \ref{est1} and \ref{general} are modified, respectively, to
\begin{eqnarray}
|\eps_i(\delta^2-D^2)w^l_i(x_j)|\;\leq\; \frac{C\delta_j^2}{\eps_1}\\
|\eps_i(\delta^2-D^2)w^l_i(x_j)| \leq C(B^l_{k}(x_{j-1})+\frac{\delta_j^2}{\eps_{k+1}})
\end{eqnarray}
Combining these with Lemma \ref{est3}, and repeating the same for the $w^r_i$ leads to the following estimate of the singular component of the local truncation
error
\begin{equation}
|\eps_{i}(\delta^2-D^2)w_i(x_j)| \leq C(N^{-1}\ln N)^2.
\end{equation}
Application of Lemma \ref{dstab} then gives
\begin{equation}\label{2ndordersingular}\;\; ||\vec{(W-w)}|| \leq C(N^{-1}\ln N)^2. \end{equation}
Combining (\ref{2ndordersmooth}) and (\ref{2ndordersingular}) leads at once to the required essentially second order parameter-uniform error estimate
\begin{equation}\;\; ||\vec{(U-u)}|| \leq C(N^{-1}\ln N)^2. \end{equation}

\end{document}